\renewcommand{\leq}{\leqslant}
\renewcommand{\geq}{\geqslant}
\def\Phii{\Phi^{\hspace{-0.5pt}\mbox{\tiny $\bot$}}}
\def\subsection{\@startsection{subsection}{3}%
  \z@{.5\linespacing\@plus.7\linespacing}{.3\linespacing}%
  {\bfseries\centering}}
\def\subsubsection{\@startsection{subsubsection}{3}%
  \z@{.5\linespacing\@plus.7\linespacing}{.3\linespacing}%
  {\centering}}
\theoremstyle{definition}
\newtheorem{theorem}{Theorem}[section]
\newtheorem{definition}[theorem]{Definition}
\newtheorem{lemma}[theorem]{Lemma}
\newtheorem{proposition}[theorem]{Proposition}
\newtheorem{example}[theorem]{Example}
\newcounter{claimcounter}
\numberwithin{claimcounter}{theorem}
\newenvironment{claim}{\stepcounter{claimcounter}{\noindent {\bf Claim \theclaimcounter.}}}{}
\newenvironment{claimproof}[1]{\noindent{{\em Proof.}}\space#1}{\hfill $\rule{0.35em}{0.35em}$}
\newcommand{\pureindep}[1][]{%
  \mathrel{
    \mathop{
      \vcenter{
        \hbox{\oalign{\noalign{\kern-.3ex}\hfil$\vert$\hfil\cr
              \noalign{\kern-.7ex}
              $\smile$\cr\noalign{\kern-.3ex}}}
      }
    }\displaylimits_{#1}
  }
}
\newcommand{\indep}[2]{%
  \mathrel{
    \mathop{
      \vcenter{
        \hbox{%
\oalign{
\noalign{\kern-.3ex}\hfil$\vert$\hfil\cr
              \noalign{\kern-.7ex}
              $\smile$\cr\noalign{\kern-.3ex}
}
}
      }
}^{\!\!\!\!\!#2}_{\!\!\hspace{-0.1em}#1}
  }
}
\newcommand{\displayindep}[2]{%
  \mathrel{
    \mathop{
      \vcenter{
        \hbox{%
\oalign{
\noalign{\kern-.3ex}\hfil$\vert$\hfil\cr
              \noalign{\kern-.7ex}
              $\smile$\cr\noalign{\kern-.3ex}
}
}
      }
}^{\!\!\hspace{-0.1em}#2}_{\!\!\hspace{-0.1em}#1}
  }
}
\newcommand{\clindep}[1][]{\indep{#1}{\mathsf{cl}}}
\newcommand{\forkindep}[1][]{\indep{#1}{\mathsf{f}}}
\newcommand{\displayclindep}[1][]{\displayindep{#1}{\mathsf{cl}}\!\hspace{-0.15em}}
\newcommand{\displayforkindep}[1][]{\displayindep{#1}{\mathsf{f}}\!\hspace{-0.15em}}
\newcommand{\typesp}[3]{S^{\mathcal{{#1}}}_{{#2}}({#3})}
\newcommand*\dep{{=\mkern-1.2mu}}
\newcommand{\Dep}[2]{\dep(\vec{{#1}}, \vec{{#2}})}
\newcommand*\bota{{\bot\mkern-1.2mu}}
\newcommand{\botc}[1]{~\bot_{#1}~}
\def\boto{\ \bot\ }
\title{Dependence Logic in Pregeometries and $\omega$-Stable Theories}
\thanks{Research of the first author was carried out while he was a Master of Logic student at the Institute for Logic, Language and Computation, University of Amsterdam, and also partially supported by
grant TM-13-8847 of CIMO. Research of the second author was partially supported by
grant 251557 of the Academy of Finland. The authors would like to thank John Baldwin, Miika Hannula, \AA sa Hirvonen, Tapani Hyttinen and Juha Kontinen for useful conversations related to this paper.}
\author{Gianluca Paolini}
\address{Department of Mathematics and Statistics,  University of Helsinki, Finland} 
\author{Jouko V\"a\"an\"anen}
\address{Department of Mathematics and Statistics, University of Helsinki, Finland  \and 
        Institute for Logic, Language and Computation, University of Amsterdam, The Netherlands}
\date{}
\begin{document}
%%%%%%%%%%%%%%%%%%
\maketitle

\begin{abstract}
We present a framework for studying the concept of independence  in a general context covering database theory, algebra and model theory as special cases. We show that well-known axioms and rules of independence for making inferences concerning basic atomic independence statements are complete with respect to a variety of semantics. Our results show that the uses of  independence  concepts in as different areas as database theory, algebra and model theory, can be completely characterized by the same axioms. We also consider concepts related to independence, such as dependence.
\end{abstract}

%\tableofcontents

\section{Introduction}
\def\la{\langle}
\def\ra{\rangle}
\def\A{\mathcal{A}}
\def\D{\mathcal{D}}
\def\I{\mathcal{I}}
\def\J{\mathcal{J}}
\def\V{\mathcal{V}}
\def\C{\mathbf{C}}
\def\P{\mathcal{P}}
\def\S{\mathcal{S}}
\def\rest{\restriction}
\def\bx{\textbf{\textit{x}}}
\def\by{\textbf{\textit{y}}}
\def\bz{\textbf{\textit{z}}}
\def\bu{\textbf{\textit{u}}}
\def\bv{\textbf{\textit{v}}}
\def\bw{\textbf{\textit{w}}}
\def\vx{\vec{x}}
\def\vy{\vec{y}}
\def\vz{\vec{z}}
\def\vu{\vec{u}}
\def\vv{\vec{v}}
\def\vw{\vec{w}}
The concepts of dependence and independence  are ubiquitous in science. They appear e.g. in biology, physics, economics, statistics, game theory, database theory,
 and  last but not least, in algebra. Dependence and independence concepts of algebra have  natural analogues in model theory, more exactly in geometric stability theory, arising from pregeometries and non-forking. In this paper we show that all these dependence and independence concepts have a common core that permits a complete axiomatization.  
 
For a succinct presentation of our results we introduce the following auxiliary  concept. 

\begin{definition}
 An {\em independence structure} is a pair $\S=(I,\bot)$ where $I$  is a non-empty set and $\bot$ is a  binary relation in the set of finite subsets of $I$, satisfying the following axioms (we use boldface symbols $\bx,\by$, etc for finite set, and  shorten $\bx\cup \by$ to $\bx\by$):
\begin{description}
\item[I1] $\bx\boto\emptyset$ 
\item[I2] If $\bx\boto \by$, then $\by\boto \bx$
\item[I3] If $\bx\boto \by\bz$, then $\bx\boto \by$
\item[I4] If $\bx\boto \by$ and $\bx\by\boto \bz$, then $\bx\boto \by\bz$. 
\item[I5] If $\bx\boto \bx$, then $\bx\boto \by$
\end{description}
\end{definition}

 If a pair $(\bx,\by)$ of subsets of $I$ satisfy $\bx\boto \by$, we say that $\bx$ and $\by$ are {\em independent} of each other.

Whitney \cite{whitney} introduced the concept of a {\em matroid} (or pregeometry) which is a stronger concept than that of  independence structure. Ours comes close, and takes inspiration from the axiomatization of independence in database theory \cite{KLV} and statistics \cite{geiger1991axioms}. Compared to the more widely known concept of matroid  our concept has the advantage that it covers more cases (database dependence, statistical independence) but, on the other hand, our concept lacks some fundamental properties of matroids, such as the uniqueness of the cardinality of maximal independent sets. 

The concept of {\em independence } is well-known in model theory  but is usually a ternary relation $\bx\ \bot_\by\ \bz$. The beauty of the {\em binary} independence relation $\bx\boto \by$ is that its axioms (I1)-(I5) are complete with respect to a variety of choices of semantics. It is our purpose in this paper to demonstrate exactly this. The same is not true of the more complex concept $\bx\ \bot_\by\ \bz$, for which there are natural axioms but no completeness is known, rather, there are negative results \cite{herrmann_undec,herrmann_undec_corrig}.

Some trivial examples of independence structures are:
\begin{itemize}
\item {\em Full independence structure}: $\bx\boto \by$ holds for all $\bx$ and $\by$.  
\item {\em Trivial independence structure}: $\bx\boto \by$ holds if and only if $\bx=\emptyset$ or $\by=\emptyset$.  
\item {\em Degenerate independence structure}: $\bx\boto \by$ holds if and only if $\bx\cap \by=\emptyset$.  
\end{itemize}

A special property of degenerate independence structures is the following {\em additivity} property:
$$(\bx\boto \bz\wedge \by\boto \bz)\Rightarrow \bx\by\boto \bz.$$

More elaborate independence structures are invariably obtained from some richer background structure. Here are two examples:

\begin{example}\label{teamexample}
Suppose $I$ is a set of variables  with possible values in a set $M$. 
A {\em team} with domain $I$ is any set $X$ of functions $s:I\to M$ \cite{MR2351449}. An individual $s\in X$ is called an {\em assignment}. 
For such $X$ we can define, following \cite{GV}, $$\bx\boto \by$$ to hold if $$\forall s,s'\in X\exists s''\in X(s''\rest \bx=s\rest \bx\mbox{ and }s''\rest \by=s'\rest \by).$$
Now $\S_X=(I,\bot)$ is an independence structure which we call a {\em team independence structure}.
\end{example}

We can also think of a team as a database with the set $I$ as the set of attributes. In that case assignments would be called {\em records} or {\em rows} of the database, and $\S_X$ a {\em database independence structure}.

\begin{example}
  Suppose $\I$ is a vector space with $I$ as the set of its vectors.
 Define for finite subsets $\bx$ and $\by$ of $I$:
 $\bx\boto \by$ if $\bx$ and $\by$ are linearly independent in the usual sense that the generated subspaces satisfy $\la \bx\ra\cap \la \by\ra = \{0\}$. Then $\S_\I=(I,\bot)$ is an independence structure which we call a {\em vector space independence structure}.
\end{example}
 A characteristic property of vector space independence structures is the basic fact that all maximal independent sets have the same cardinality. This is not true in team independence structures in general, as is easy to see. Vector space independence structures are special cases of independence structures arising from pregeometries (a.k.a. matroids), see Section~\ref{preg} below.

The families of independence structures arising from teams, databases, vector spaces or pregeometries are all examples of {\em classes}  of independence structures. Such classes $\C$ are the main topic of this paper. A special emphasis is on classes $\C$ of independence structures (or sets) arising from pregeometries and non-forking in models of $\omega$-stable theories, see Section~\ref{daiist} below. For further results in this direction we refer to \cite{pao} and \cite{hytpao}.

We set up a formal ``calculus" of independencies $\bx\boto \by$ in independence structures and call such calculi, when combined with a particular class $\C$ of independence structures, {\em atomic logics}. They are {\em atomic} because in this paper we do not consider rules and meanings concerning logical operations at all. In \cite{MR2351449} and \cite{GV} the full first-order logic based on atomic dependence or independence formulas is developed in the context of team independence structures. It turns out that in this case the full logic has the same expressive power as existential second-order logic \cite{kont}. 

The {\em independence calculus} is defined as follows: Suppose $V$ is a set of variables (i.e. variable symbols). We consider finite sequences $\vx,\vy,$ etc of variables. Concatenation of $\vx$ and $\vy$ is denoted $\vx\vy$. We use $V^{<\omega}$ to denote the set of all such sequences.  The empty sequence is denoted $\emptyset$. Expressions of the form $\vx\boto \vy$, where $\vx$ and $\vy$ are finite sequences of elements of $V$, are called {\em independence atoms} over $V$. Related to the axioms (I1)-(I5), we have the following monotone operator $\Phii$ on $V^{<\omega}$: 
\begin{eqnarray*}
\Phii(\Sigma)&=&\{\vx\boto\emptyset\}\cup\\
& &\{\vx\boto \vy : \vy\boto \vx\in \Sigma\}\cup\\
& &\{\vx\boto \vy : \vx\boto \vy\vz \in \Sigma\}\cup\\
& &\{\vx\boto \vy\vz : \vx\boto \vy \in \Sigma\mbox{ and } \vx\vy \boto \vz\in \Sigma\}\cup\\
& &\{\vx\boto \vy : \vx\boto \vx\in \Sigma\}\cup\\
& &\{\vz\boto \vy : \vx\boto \vy\in \Sigma\mbox{ and $\vz$ a permutation of $\vx$}\}.
\end{eqnarray*}
If $\Sigma$ is a set of independence atoms atoms over $V$, we use $$\la\Sigma\ra$$ to denote the closure of $\Sigma$ under the operator $\Phii$. Intuitively, $\la\Sigma\ra$ is the set of  atoms that follow from atoms in $\Sigma$ by means of the rules (I1)-(I5).

\begin{definition}
We say that an independence atom $\phi$ is {\em derivable} from a set $\Sigma$ of  independence atoms, $$\Sigma\vdash_{\mathcal{I}} \phi,$$ if $\phi\in \la\Sigma\ra$.
\end{definition}

 We can interpret the variables $v\in V$ in an independence structure $\S=(I,\bot_\S)$ with a mapping  $s:V\to I$ called assignment (into $\S$).  This induces a canonical mapping $\vx\mapsto s(\vx)$ of finite sequences $\vx$ of variables to finite subsets $$s(( x_0,\ldots,x_{n-1}))=\{s(x_0),\ldots,s(x_{n-1})\}$$ of $I$.  Then for any  finite sequences $\vx$ and $\vy$ of elements of $V$ and any assignment $s$ it is natural to define
$$\S\models_s \vx\boto \vy\mbox{ if and only if }s(\vx)\ \bot_\S\ s(\vy).$$ If this is the case, we say that $s$ {\em satisfies} the independence $\vx\boto \vy$ in $\S$. We use the same convention for sets $\Sigma$ of atoms.

\begin{definition}
 Suppose  $\C$ is a class of independence structures. We say that a set $\Sigma$ of  independence atoms over $V$ {\em logically implies} the independence atom $\phi$ over $V$ in independence structures in $\C$, $$\Sigma\models_\C \phi,$$
 if every assignment in any $\S\in\C$ which satisfies  $\Sigma$, also satisfies $\phi$.
\end{definition}

We have now introduced the key concepts behind the main question investigated in this paper:
\medskip

\noindent{\bf Completeness Question for $\C$:} Given a class $\C$ of independence structures, a set $\Sigma$ of independence atoms and an independence atom $\phi$, is it true that $$\Sigma\vdash_{\mathcal{I}}\phi\iff\Sigma\models_\C \phi?$$
%\medskip

Obviously the answer to the Completeness Question for $\C$ depends heavily on $\C$. A positive answer is a sign of the richness of $\C$, and if we choose $\C$ to be the class of {\em all} independence structures, then a positive answer follows trivially. So we are only interested in small restricted classes $\C$.  If two classes $\C$ and $\C'$ give both a positive answer to the Completeness Question, then $$\Sigma\models_\C \phi\iff\Sigma\models_{\C'} \phi,$$ which establishes an affinity between $\C$ and $\C'$: basic properties of independence are governed by the same axioms (I1)-(I5) in both classes.

 We show in this paper that the Completeness Question can be answered positively for all the naturally arising classes $\C$ in the contexts of
 \begin{itemize}
\item Team semantics i.e. databases (\cite{geiger1991axioms,galliani_and_vaananen}),
%\item Closure Operators % Dependence Logic with atoms $\dep(\vec{x}, \vec{y})$

\item Pregeometries (Theorem~\ref{compl_pregeom_atom_indep_logic}),%  Absolute Independence Logic with atoms $\bota (\vec{x})$
%\item Pregeometry  Independence Logic  with atoms $\vec{x} \boto \vec{y}$
%\item Pregeometry  Conditional Independence Logic with atoms 
%$\vec{x} \botc{\vec{z}} \vec{y}$
\item $\omega$-stable first-order theories (Theorem~\ref{saind}),

\item Infinite vector spaces over a countable field (Section~\ref{vsacf}),

\item Algebraically closed fields of fixed characteristic (Section~\ref{vsacf}).% Independence Logic with % Independence Logic with atoms $\vec{x} \boto \vec{y}$
%\item $\omega$-Stable  Conditional Independence Logic with atoms $\vec{x} \botc{\vec{z}} \vec{y}$ 
%\item $\omega$-Stable  Dependence Logic with atoms $\dep(\vec{x}, \vec{y})$

\end{itemize} 

\def\rr{\Rightarrow}

We study also some related variants of independence. A simplification of the concept of independence is the following: 

\begin{definition}\label{ainddef}
  An {\em absolute independence structure} is a pair $\S=(I,\A)$ where $I$  is a non-empty set and $\A$ is a   set of finite subsets of $I$, closed under subsets. 
\end{definition}
  
  A typical example is the set of  sets which are linearly independent  in a vector space.

{\em Dependence} is a concept which occurs at least as frequently in science as independence. Consider for example, linear dependence in vector spaces, or functional dependence in databases. We can give dependence a similar treatment as that we gave for independence:

\begin{definition}\label{depdef}
  A {\em dependence structure} is a pair $\D=(I,\rr)$ where $I$  is a non-empty set and $\rr$ is a  binary relation in the set of finite subsets of $I$, satisfying the following axioms:
\begin{description}
\item[D1] $\bx\rr \bx$. 
\item[D2] If $\bx\rr \by\bz$, then $\bx\bu\rr \by$.
\item[D3] If $\bx\rr \by$ and $\by\rr \bz$, then $\bx\rr \bz$.
\item[D4] If $\bx\rr \by$ and $\bx\rr \bz$, then $\bx\rr \by\bz$.
\end{description}
\end{definition}

Dependence structures model typically linear dependence in vector spaces and functional dependence in databases. As a degenerate case we have the dependence structure $(I,\rr)$ where $$\bx\rr\by\mbox{ if and only if } \by\subseteq\bx.$$ 

We can build a ``dependence calculus", derivability of {\em dependence atoms} $\dep(\vx,\vy)$ (with the meaning that the interpretation $\bx$ and $\by$ of $\vx$ and $\vy$, respectively, satisfy $\bx\rr\by$) from sets of such atoms, define truth of dependence atoms in classes  $\C$ of dependence structures, and ask the Completeness Question as we did for classes of independence structures. A classic result in this respect is the Completeness Theorem of Armstrong \cite{DBLP:conf/ifip/Armstrong74}. We give some new positive answers to the Completeness Question for dependence structures (Theorems \ref{compl_clo_op_dep_logic} and \ref{char_stab_dep_with_acl_dep}).

We also consider a version of {\em conditional} independence, which turns out to be more complicated than the original unconditional independence (I1)-(I5).
%\marginpar{Do we actually *know* the answer is negative? Dawid may have answered this.}. An {\em absolute conditional independence structure} is a pair $\S=(I,\bot)$ where $I$  is a non-empty set and $\bot$ is a binary relation  between finite subsets $x$ and $y$ of $I$, denoted $\bot_xy$,  satisfying the following axioms:
%\begin{description}
%\item[AC1]  $\bot_z\emptyset$. 
%\item[AC2] If $\bot_zxy$, then $\bot_zx$.
%\item[AC3] If $\bot_{zx}y$, then $\bot_zy$.
%\item[AC4] If $\bot_zxy$ and $\bot_{zx}y$, then $\bot_{zy}x$.
%\end{description} The standard example is the case of vector spaces where 
%we interpret $\bot_xy$ to mean that there are no non-trivial linear equations between the elements of $y$ if we factor out $\la x\ra$.
\begin{definition}
  A {\em conditional independence structure} is a pair $\S=(I,\bot)$ where $I$  is a non-empty set and $\bot$ is a ternary relation between finite subsets $\bx,\by,\bz$ of $I$, denoted $\bx\ \bot_\by\ \bz$, satisfying the following axioms:
\begin{description}
	\item[C1] $\bx \botc{\bx} \by$.
	\item[C2] If $\bx \botc{\bz} \by$, then $\by \botc{\bz} \bx$.
	\item[C3] If $\bx  \bx'  \botc{\bz} \by  \by'$, then $\bx \botc{\bz} \by$.
	\item[C4] If $\bx \botc{\bz} \by$, then $\bx  \bz  \botc{\bz} \by  \bz$.
	\item[C5] If $\bx \botc{\bz} \by$ and $\bu \botc{\bz, \bx} \by$, then $\bu \botc{\bz} \by$.
	\item[C6] If $\by \botc{\bz} \by$ and $\bz  \bx \botc{\by} \bu$, then $\bx \botc{\bz} \bu$.
	\item[C7] If $\bx \botc{\bz} \by$ and $\bx  \by \botc{\bz} \bu$, then $\bx \botc{\bz} \by  \bu$.
\end{description}
\end{definition}

This is close to the concept of {\em separoid} of Dawid \cite{dawid}.
The standard example of conditional independence is the case of vector spaces where we can interpret $\bx\ \bot_\bz\ \by$ as $\la \bx\ra\cap \la \by\ra\subseteq\la \bz\ra$. In model theory the corresponding concept is ``$\bx$ is {\em independent of $\by$ over $\bz$}" (Definition~\ref{3.14}). We cannot solve the Completeness Question for the class of conditional independence structures, but point out some partial negative results.

%, we show that if we look at  {\em other} concepts related to independence, such as dependence, or what we call absolute independence, the answer is likewise positive for respectively modified axioms. 

\section{Abstract Systems}\label{abs_sys}

We commence with a completely abstract setup. Here we assume no background geometry and the meaning of dependence and independence are purely combinatorial. This is the way these concepts are treated in database theory.

\smallskip

%\subsection{Atomic Dependendce Logic}

 \index{Atomic!Dependende Logic} {\em Atomic Dependence Logic} is defined as follows.
%
%\subsubsection{Syntax}
%
	The language of this logic is made  of dependence atoms only. That is, if $\vec{x}$ and $\vec{y}$ are finite sequences of variables, with $\vec{y} \neq \emptyset$ if $\vec{x} \neq \emptyset$, then the formula $\Dep{x}{y}$ is a formula of the language of Atomic Dependence Logic.
	The deductive system $\vdash_\mathcal{D}$ consists of the following set of rules:
\begin{enumerate}[($a{_1.}$)]
	\item $\Dep{x}{x}$;
	\item If $\dep({\vx},{\vy\vz})$,  then $\dep({\vx\vu},{\vy})$;
	\item If $\Dep{x}{y}$ and $\Dep{y}{z}$, then $\Dep{x}{z}$;
	\item If $\Dep{x}{y}$ and $\dep(\vec{x}, \vec{v})$, then $\dep(\vec{x}, \vec{y}  \vec{v})$;
	\item If $\Dep{x}{y}$ and $\vz$ is a permutation of $\vx$, then $\Dep{z}{y}$.

\end{enumerate}
As a degenerate case of ($a_1.$) we admit $\dep(\emptyset, \emptyset)$. For the semantics
let us recall from Example~\ref{teamexample} that  if $I$ is a set of variables, a {\em team with domain $I$} is any set $X$ of functions $s:I\to M$. 
For such $X$ and finite sets $\bx$ and $\by$ of elements of $I$ we define $$\bx\rr\by\mbox{ if and only if } \forall s, s' \in X \; (s(\bx) = s'(\bx) \rightarrow s(\by) = s'(\by)). $$
This defines a dependence structure $$\D_X=(I,\rr),$$ in the sense of Definition~\ref{depdef},   which we call a {\em team dependence structure}.

In plain words, a team is a table of data and $\bx\rr\by$ holds if the data on columns listed in $\bx$ functionally determines the data on columns listed in $\by$.
In the team (table of data) depicted in Figure~\ref{ateam}  the dependence
$\{x_1,x_2,x_3\}\rr\{x_4,x_5\}$ holds but $\{x_2,x_3\}\Rightarrow\{x_5\}$ does not. 
\begin{figure}[h]
\begin{tabular}{|c|c|c|c|c|}
\hline
$x_1$&$x_2$&$x_3$&$x_4$&$x_5$\\
\hline
0&0&1&2&3\\
0&1&1&4&3\\
1&1&1&4&4\\
0&1&0&3&2\\
\hline
\end{tabular}
 \caption{A team.\label{ateam}}
\end{figure}
Functional dependencies have a great importance in database theory (see e.g. \cite[Chapter 8]{zbMATH00839556}). Finding functional dependencies is also an important goal in science in general: what determines the time of descent of a freely falling body? What determines the gender of an offspring? Is tendency to diabetes  hereditary? What causes cancer? What causes global warming? An important approach to all such questions is collecting a large data set $X$ (a team) and trying to locate dependencies in the sense of $\D_X$.

Suppose $V$ is a set of variables.  We can interpret $V$ in a dependence structure $\D=(I,\rr_\D)$ with a mapping  $s:V\to I$ called assignment (into $\D$).  This induces a canonical mapping $\vx\mapsto s(\vx)$ as above. Then for any  finite sequences $\vx$ and $\vy$ of elements of $V$ and any assignment $s$ it is natural to define
$$\D\models_s \dep(\vx,\vy)\mbox{ if and only if }s(\vx) \rr_\D s(\vy).$$ If this is the case, we say that $s$ {\em satisfies} the dependence $\dep(\vx,\vy)$ in $\D$. 

Suppose $\C$ is a class of dependence structures. If every assignment into any dependence structure in $\C$ which satisfies every atom in a set $\Sigma$ of dependence atom, also satisfies the dependence atom $\phi$, we write $\Sigma\models_{\C} \phi$.

%
%\end{definition}
%
%	\begin{definition}
%%Let $\Sigma$ be a set of atoms and let $X$ be such that the set of variables occurring in $\Sigma$ is included in $\mathrm{dom}(X)$. We say that $\mathcal{M}$ {\em satisfies} $\Sigma$ under $X$, in symbols $\mathcal{M} \models_X \Sigma$, if $\mathcal{M}$ satisfies every atom in $\Sigma$ under $X$. 
%
%\end{definition} 
%
%		\begin{definition} 
%We say that $\Dep{x}{y}$ is a {\em logical consequence} of $\Sigma$, in symbols $\Sigma \models \Dep{x}{y}$, if for every $\mathcal{M}$ and $X$ such that the set of variables occurring in $\Sigma \cup \left\{ \Dep{x}{y} \right\}$ is included in $\mathrm{dom}(X)$ we have: 
%that
%		\[ \text{ If } \; \mathcal{M} \models_X \Sigma \; \text{ then } \; \mathcal{M} \models_X \Dep{x}{y}. \]
%
%\end{definition}
%
%\subsubsection{Deductive system}\label{ded_system_dep}
%

%\subsubsection{Soundness and Completeness} 
%
Let $\C_{\mbox{\tiny TD}}$ be the class of all team dependence structures. The following result is known in database theory as Armstrong's Completeness Theorem:	

\begin{theorem}[\cite{DBLP:conf/ifip/Armstrong74}] The Completeness Question for $\C_{\mbox{\tiny TD}}$ has a positive answer i.e. if $\Sigma$ is a set  of dependence atoms and $\phi$ is a a dependence atom, then $$\Sigma\vdash_\mathcal{D}\phi\iff\Sigma\models_{\C_{\mbox{\tiny TD}}} \phi.$$
\end{theorem}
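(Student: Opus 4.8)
The statement is a soundness-and-completeness claim, so the plan is to establish the two implications separately, treating the harder direction $\Sigma\models_{\C_{\mbox{\tiny TD}}}\phi\Rightarrow\Sigma\vdash_\mathcal{D}\phi$ in contrapositive form by exhibiting an explicit two-row counterexample team. Soundness, $\Sigma\vdash_\mathcal{D}\phi\Rightarrow\Sigma\models_{\C_{\mbox{\tiny TD}}}\phi$, I would dispatch first and directly: it suffices to check that each team dependence structure $\D_X=(I,\rr)$ validates the five rules, working straight from the defining condition $\bx\rr\by\iff\forall s,s'\in X\,(s(\bx)=s'(\bx)\to s(\by)=s'(\by))$. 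All five are immediate --- $(a_1.)$ is trivial; for $(a_2.)$ agreement on $\bx\bu$ is stronger than agreement on $\bx$ while $s(\by\bz)=s'(\by\bz)$ forces $s(\by)=s'(\by)$; $(a_3.)$ is transitivity of implication; $(a_4.)$ uses that $s(\by)=s'(\by)$ and $s(\bv)=s'(\bv)$ give $s(\by\bv)=s'(\by\bv)$; and $(a_5.)$ holds because $s(\vx)$ depends only on the underlying set of $\vx$. Since a derivation in $\vdash_\mathcal{D}$ merely chains these rules, truth is preserved along it.

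The combinatorial engine for completeness is the closure $\mathrm{cl}_\Sigma(\vx)=\{v\in V:\Sigma\vdash_\mathcal{D}\dep(\vx,v)\}$, where $v$ ranges over one-element sequences. I would first prove the \emph{closure lemma}: $\Sigma\vdash_\mathcal{D}\dep(\vx,\vy)$ iff every variable occurring in $\vy$ lies in $\mathrm{cl}_\Sigma(\vx)$. The ``only if'' direction peels off single targets using the $\vu=\emptyset$ instance of $(a_2.)$ (decomposition), and the ``if'' direction reassembles $\dep(\vx,\vy)$ from the atoms $\dep(\vx,v)$ via the union rule $(a_4.)$ together with the permutation rule $(a_5.)$. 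I would also record two facts: the variables of $\vx$ belong to $\mathrm{cl}_\Sigma(\vx)$ (by $(a_1.)$ and decomposition), and $\mathrm{cl}_\Sigma(\vx)$ is \emph{closed under $\Sigma$} in the sense that if $\dep(\vu,\vv)\in\Sigma$ and all variables of $\vu$ lie in $\mathrm{cl}_\Sigma(\vx)$, then so do those of $\vv$ --- proved by collecting the $\dep(\vx,u)$ into $\dep(\vx,\vu)$ and chaining with $(a_3.)$, the degenerate case $\vu=\emptyset$ being handled instead by the left-augmentation instance ($\vz=\emptyset$) of $(a_2.)$.

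For completeness itself, suppose $\Sigma\not\vdash_\mathcal{D}\phi$ and write $\phi=\dep(\vx,\vy)$; by the closure lemma there is a variable $w$ occurring in $\vy$ with $w\notin\mathrm{cl}_\Sigma(\vx)$. I would then take $I=V$ with the identity assignment $s_{\mathrm{id}}$, set $M=\{0,1\}$, and form the two-row team $X=\{s,s'\}$ where $s$ is constantly $0$ and $s'(v)=0$ for $v\in\mathrm{cl}_\Sigma(\vx)$ and $s'(v)=1$ otherwise, so that $s$ and $s'$ agree exactly on $\mathrm{cl}_\Sigma(\vx)$. Closure-under-$\Sigma$ gives $X\models_{s_{\mathrm{id}}}\Sigma$: for any $\dep(\vu,\vv)\in\Sigma$, if $s,s'$ agree on the variables of $\vu$ then those variables lie in $\mathrm{cl}_\Sigma(\vx)$, hence so do those of $\vv$, hence $s,s'$ agree on $\vv$. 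Meanwhile $s,s'$ agree on $\vx$ (whose variables are in the closure) but differ at $w\in\vy$, so $X\not\models_{s_{\mathrm{id}}}\dep(\vx,\vy)$, witnessing $\Sigma\not\models_{\C_{\mbox{\tiny TD}}}\phi$.

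The main obstacle is the closure lemma together with the verification that this two-row relation satisfies $\Sigma$; the delicate point is not any single step but correctly threading the rules through the well-formedness convention (the requirement $\vy\neq\emptyset$ when $\vx\neq\emptyset$, with $\dep(\emptyset,\emptyset)$ admitted as the lone degenerate case) and through the distinction between a sequence $\vx$ and its underlying set $s_{\mathrm{id}}(\vx)$ --- precisely where $(a_5.)$ and the empty-sequence edge cases must be tracked.
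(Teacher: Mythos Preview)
Your argument is correct and is precisely the classical Armstrong construction. Note that the paper does not supply its own proof of this theorem: it simply cites \cite{DBLP:conf/ifip/Armstrong74} and remarks afterward that ``we can choose $M=\{0,1\}$ and the teams can be limited to consist of just two assignments'' --- which is exactly the two-row counterexample team you build from the closure $\mathrm{cl}_\Sigma(\vx)$. So your proposal and the (implicit) paper proof coincide.

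One small comment on presentation: in your ``only if'' direction of the closure lemma you invoke the $\vu=\emptyset$ instance of $(a_2.)$ to extract $\dep(\vx,v)$ for each $v$ in $\vy$, but $(a_2.)$ as stated only lets you drop a \emph{suffix} of the right-hand side, and rule $(a_5.)$ permutes only the left-hand side. You therefore need an extra line deriving $\dep(\vy,v)$ for each component $v$ of $\vy$ (via $(a_1.)$, $(a_2.)$, $(a_5.)$) and then applying transitivity $(a_3.)$ --- or equivalently deriving permutation of the right-hand side once and for all. This is routine and does not affect the correctness of the overall strategy.
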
 

Inspection of the proof reveals that we can choose $M=\{0,1\}$ and the teams can be limited to consist of just two assignments. This shows how restricted a class can give a positive answer to the Completeness Question for the dependence atoms.
%
%	\begin{proof} .
	% and \cite{Fagin:1977:FDR:1664302.1664308}.
	%	
%\end{proof}

%%%%%%%%%%%%%%%%%%

%\subsection{Atomic Absolute Independence Logic}

{\em Atomic Absolute Independence Logic} is defined as follows.
%
%\subsubsection{Syntax}
%
The language of this logic is made only of absolute independence atoms defined as follows: suppose $\vec{x}$ is a finite sequence of distinct\footnote{Asking that the variables are distinct is only a technical requirement for the completeness proof below. Obviously, this assumption comes at no conceptual cost.} variables, then $\bota (\vec{x})$ is an {\em absolute independence atom}.
	The deductive system $\vdash_\mathcal{AI}$ consists of the following set of rules:
\begin{enumerate}[($a_2.$)]
	\item $\bota (\emptyset)$;
	\item If $\bota (\vec{x}\vec{y})$, then $\bota (\vec{x})$;
	\item If $\bota (\vec{x})$, then $\bota (\vec{y})$ whenever $\vy$ is a permutation of $\vec{x}$.
	
\end{enumerate}

%
%\subsubsection{Semantics}
%
	The intuition behind the atom $\bota (\vec{x})$ is that $\vec{x}$ consists of independent elements. That is, each element of $\vec{x}$ is independent of all the other elements of $\vec{x}$. In particular, we ask that each element of $\vec{x}$ does not depend on any other element, i.e. that it is not constant, for in our terminology a constant would be determined (in a trivial way) by any of the other variables.
%	
%	\begin{definition} 

Let $X$ be a team in the sense of Example~\ref{teamexample}. If $x_i \in \bx$, we denote by $\bx -_X x_i$  the set $\left\{ x_j \in \bx \; | \; 
%
%\mathcal{M} \not\models_X x' = x 
%
\exists s\in X(s(x_i)\ne s(x_j))\right\}$. 
% We say that $\mathcal{M}$ satisfies $\bota (\vec{x})$ under $X$, in symbols $\mathcal{M} \models_X \bota (\vec{x})$, if for all $x_i \in \bx$
Let $\A_X$ be the set of finite subsets $\bx$ of $I$ such that	\[ \forall s, s' \in X \;  \exists s'' \in X \; (s''(x_i) = s(x_i) \wedge s''(\bx -_X x_i) = s'(\bx -_X x_i)) \]
	\[ \;\;\;\;\;\; \text{ and } \;\;\;\;\; \]
	\[ \exists s, s' \in X \; (s(x_i) \neq s'(x_i)). \]
Then $\I_X=(I,\A_X)$ is an absolute independence structure in the sense of Definition~\ref{ainddef}, and we call it a {\em  absolute team independence structure}.

In plain words, $\bx\in\A_X$ holds in a team $X$ (i.e. in an absolute team independence structure $\I_X$) if the data on columns listed in $\bx$ has the property that knowing data on one of the columns gives no hint what the data on the other columns is.
In the team (table of data) depicted in Figure~\ref{aateam}  the absolute independence
$\bota(\{x_1,x_2\})$ holds but $\bota(\{x_1,x_3\})$ does not. 
\begin{figure}[h]
\begin{tabular}{|c|c|c|}
\hline
$x_1$&$x_2$&$x_3$\\
\hline
0&0&1\\
0&1&1\\
1&0&1\\
1&1&0\\
\hline
\end{tabular}
 \caption{Another team.\label{aateam}}
\end{figure}
Absolute independencies have a great importance in statistics. If in the team of Figure~\ref{aateam} $x_1,x_2,x_3$ are random variables, the independence of e.g. $\{x_1,x_2\}$, i.e. the fact that $\{x_1,x_2\}\in\A_X$, means simply that $x_1$ and $x_2$ are independent random variables in the sense of probability theory.

Suppose $V$ is a set of variables.  We can interpret $V$ in an absolute independence structure $\I=(I,\A)$ with a mapping  $s:V\to I$ called assignment (into $\I$).  This induces a canonical mapping $\vx\mapsto s(\vx)$ as above. Then for any  finite sequence $\vx$ of elements of $V$ and any assignment $s$ it is natural to define
$$\I\models_s \bota(\vx)\mbox{ if and only if }s(\vx)\in \A.$$ If this is the case, we say that $s$ {\em satisfies} the absolute independence atom $\bota(\vx)$ in $\I$. 

Suppose $\C$ is a class of absolute independence structures. It should be clear what $\Sigma\models_{\C} \phi$ means for a set $\Sigma$ of absolute independence atoms and a single absolute independence atom $\phi$.

%\subsubsection{Soundness and Completeness} 
%
Let $\C_{\mbox{\tiny ATM}}$ be the class of all absolute team independence structures.
	\begin{theorem}  The Completeness Question for $\C_{\mbox{\tiny ATM}}$ has a positive answer i.e. if $\Sigma$ is a set  of absolute independence atoms and $\phi$ is a an absolute  independence atom, then $$\Sigma\vdash_{\mathcal{AI}}\phi\iff\Sigma\models_{\C_{\mbox{\tiny ATM}}} \phi.$$
\end{theorem}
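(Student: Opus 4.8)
The plan is to reduce every absolute independence atom to its underlying set of variables and then to settle completeness with a single, explicitly constructed team. For an atom $\bota(\vec{x})$ write $\mathrm{Var}(\vec{x})$ for the finite set of variables occurring in $\vec{x}$ (recall atoms use distinct variables). I would first establish the characterization: $\Sigma\vdash_{\mathcal{AI}}\bota(\vec{x})$ holds iff either $\vec{x}=\emptyset$ or $\mathrm{Var}(\vec{x})\subseteq\mathrm{Var}(\vec{w})$ for some $\bota(\vec{w})\in\Sigma$. The right-to-left inclusion comes from permuting $\vec{w}$ with $(c_2.)$ so that $\vec{x}$ is an initial segment, dropping the tail with $(b_2.)$, and permuting back, with $(a_2.)$ covering the empty case. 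For left-to-right I would show that the set of atoms described on the right already contains $\Sigma$ and is closed under all three rules (each rule plainly preserves the property of having variable-set contained in some $\mathrm{Var}(\vec{w})$), so it contains the whole closure $\langle\Sigma\rangle$.

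Soundness then follows rule-by-rule in any $\I_X$: $\bota(\emptyset)$ holds since $\emptyset\in\A_X$ vacuously; $(b_2.)$ holds because $\A_X$ is closed under subsets, being an absolute independence structure by construction; and $(c_2.)$ is immediate since permutation does not change the interpreted set. For completeness I argue the contrapositive. Suppose $\Sigma\not\vdash_{\mathcal{AI}}\phi$ with $\phi=\bota(\vec{x})$ and $A=\mathrm{Var}(\vec{x})$. By the characterization $A\neq\emptyset$ and $A\not\subseteq W$ for every $\bota(\vec{w})\in\Sigma$, where $W=\mathrm{Var}(\vec{w})$. Let $V_0$ be the finite set of variables occurring in $\Sigma$ or $\phi$, take $I=V_0$, $M=\{0,1\}$, and let $s$ be the identity assignment on $V_0$. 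Define the team $X$ to consist of all $t\colon V_0\to\{0,1\}$ whose restriction to $A$ is \emph{not} constantly $1$; that is, I delete exactly the single all-ones-on-$A$ pattern, keeping every other row, so the variables outside $A$ remain completely free.

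I would then verify (i) $A\notin\A_X$ and (ii) $W\in\A_X$ for every $\bota(\vec{w})\in\Sigma$. For (ii), since $A\not\subseteq W$ there is $x_i\in A\setminus W$; given any target pattern on $W$ one completes it to a row of $X$ by setting $x_i=0$ (so the $A$-pattern is not all ones) and the remaining free variables arbitrarily, whence the projection of $X$ to $W$ is the full product $\{0,1\}^W$. Fullness makes each variable of $W$ non-constant, forces $W-_X w=W\setminus\{w\}$ for each $w$, and validates the recombination clause because every value-profile on $W$ is realized; hence $W\in\A_X$. For (i): if $|A|\ge 2$, fix any $x_i\in A$; the profile that is $1$ on $A\setminus\{x_i\}$ is realized by the row that is $0$ at $x_i$ and $1$ elsewhere on $A$, and the value $1$ for $x_i$ is realized, yet the joint all-ones profile on $A$ is missing, so the recombination clause fails; if $|A|=1$ the deletion makes the sole variable constantly $0$, so the non-constancy clause fails. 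Either way $A\notin\A_X$, and therefore $\I_X\models_s\Sigma$ while $\I_X\not\models_s\phi$, giving $\Sigma\not\models_{\C_{\mbox{\tiny ATM}}}\phi$.

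The main obstacle I anticipate is the interaction with the difference-set operator $\bx-_X x_i$, which discards variables constantly equal to $x_i$ on $X$. This makes the naive idea — impose a linear constraint $\bigoplus_{x\in A}x=0$ to kill $A$ — incorrect: for $|A|=2$ parity forces $x_1=x_2$ identically, so $x_2$ is deleted from $\{x_1,x_2\}-_X x_1$, the recombination clause becomes vacuous, and $A$ lands \emph{in} $\A_X$. The delete-one-pattern team above is designed precisely to avoid creating such duplications — for $|A|\ge 2$ no two variables of $A$ are constantly equal — so $-_X$ has no effect on $A$ and the recombination genuinely fails; the remaining work is the routine but careful check that this single deletion leaves every set not containing $A$ with a full-product projection.
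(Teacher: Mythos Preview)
Your argument is correct and takes a genuinely different route from the paper. You first isolate the syntactic content of $\vdash_{\mathcal{AI}}$ --- an atom is derivable iff its variable set sits inside that of some hypothesis --- whereas the paper rederives the needed instance of this inside its case analysis. For the counter-team, the paper ties $x_0$ to the parity $x_0=p(x_1,\ldots,x_{n-1})$ and lets all other variables range freely; you instead keep the full cube on $V_0$ and delete the single all-ones-on-$A$ row. Your construction is easier to verify: once $A\not\subseteq W$, fixing a coordinate of $A\setminus W$ to $0$ makes the projection to $W$ the full product $\{0,1\}^W$, after which both clauses of $\A_X$ are immediate, while the paper needs a longer case split on how $\vec{v}$ intersects $\vec{x}$.

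Your closing remark about the $-_X$ operator is well taken and in fact applies to the paper's own construction: for $|A|=2$ the parity constraint forces $s(x_0)=s(x_1)$ identically, so $\vec{x}-_X x_0=\emptyset$, the recombination clause becomes vacuous, and the paper's team actually \emph{satisfies} $\bota(x_0,x_1)$ rather than refuting it. Your single-row deletion avoids exactly this, since for $|A|\ge 2$ no two coordinates of $A$ become constantly equal in $X$. One small slip to fix: you call $V_0$ ``finite'', but $\Sigma$ may be infinite; nothing in your argument uses finiteness, so simply drop the word (or take $V_0=\mathrm{Var}$) and the proof goes through unchanged.
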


	\begin{proof} Soundness is obvious (for rule ($a_2.$) notice that the quantifier for all $x \in \vec{x}$ is vacuous).
Regarding completeness, suppose $\Sigma \nvdash \bota (\vec{x})$. Notice that, because of rule ($a_2.$), $\vec{x} \neq \emptyset$. Let $\vec{x} = (x_{0}, ..., x_{n-1})$.
	Let ${M}=\left\{ 0, 1 \right\}$. Define $X = \left\{ s_t \; | \; t \in 2^{\omega} \right\}$ to be the set of assignments which give all the possible combinations of $0$s and $1$s to all the variables but $x_{0}$ and which at $x_{0}$ are such that 
		\[ \begin{array}{rcl} 
			\;\;\;\;\;\;\;\;\;\;\;\;\;\; & s_t(x_{0}) = 0	& \;\;\;\;\;\; \text{ if }	\vec{x} = ( x_{0} )	
		\end{array}\]
		\[ \begin{array}{rcl} 
			\;\;\;\;\;\;\;\;\;\;\;\;\;\; & s_t(x_{0}) = p(s_t(x_{1}), ..., s_t(x_{n-1}))	& \;\;\;\;\;\; \text{ if }	\vec{x} \neq ( x_{0} )
		\end{array}\]
for all $t \in 2^{\omega}$, where $p: M^{<\omega} \rightarrow M$ is the function which assigns $1$ to the  sequences with an odd numbers of $1$s and $0$ to the sequences with an even numbers of $1$s.

Let $\I=\I_X$.
	We claim that $\I \not\models \bota (\vec{x})$. If $\vec{x} = ( x_{0})$, then  we have that for all $t \in 2^{\omega}$, $s_t(x_{0}) = 0$. Thus, there is $x \in \vec{x}$ for which there are no $s, s' \in X$ such that $s(x) \neq s'(x)$. On the other hand, if $\vec{x} \neq ( x_{0})$, then for all $t \in 2^{\omega}$, $s_t(x_{0}) = p(s_t(x_{1}), ..., s_t(x_{n-1}))$. Notice that in this case $n \geq 2$. Let $t, d \in 2^{\omega}$ be such that $s_t(x_{1}) = 0$, $s_d(x_{1}) = 1$ and $s_t(x_{i}) = s_d(x_{i})$ for every $i \in \left\{ 2, ..., n-1 \right\}$. Clearly 
	\[ p(s_t(x_{1}), ..., s_t(x_{n-1})) \neq p(s_d(x_{1}), ..., s_d(x_{n-1})).\]
	Suppose that $\I \models \bota (\vec{x})$. Then there exists $f \in 2^{\omega}$ such that 
	\[ s_f(x_{0}) = s_t(x_{0}) \text{ and } s_f(\vec{x} -_X x_{0}) = s_d(\vec{x} -_X x_{0})). \]
Notice that under $X$ we have that $\vec{x} -_X x_{0} = (x_{1}, ..., x_{n-1})$, thus
	\[ s_f(\vec{x} -_X x_{0}) = (s_f(x_{1}), ..., s_f({x_{n-1}})) \]
and	
	\[ s_d(\vec{x} -_X x_{0}) = (s_d(x_{1}), ..., s_d({x_{n-1}})). \]
	Hence 
	\[	\begin{array}{rcl}
				p(s_d(x_{1}), ..., s_d(x_{n-1})) & = &  p(s_f(x_{1}), ..., s_f(x_{{n-1}})) \\
													   & = &  s_f(x_{0}) \\
													   & = &  s_t(x_{0}) \\
													   & = &  p(s_t(x_{1}), ..., s_t(x_{n-1})),
\end{array} \]
which is a contradiction.
\smallskip

\noindent	Let now $\bota (\vec{v}) \in \Sigma$, we want to show that $\I \models \bota (\vec{v})$. Notice that if $\vec{v} = \emptyset$, then $\I\models_X \bota (\vec{v})$. Thus let $\vec{v} = (v_{0}, ..., v_{c-1})  \neq \emptyset$. 
	We make a case distinction on $\vec{v}$. 
\smallskip

\noindent	{\bf Case 1.} $x_{0} \notin \vec{v}$. 
	Let $v \in \vec{v}$. Because of the assumption, $v \neq x_{0}$ and $x_{0} \notin \vec{v} -_X v$. Thus for every $t, d \in 2^{\omega}$ clearly there is $f \in 2^{\omega}$ such that
	\[ s_f(v) = s_t(v) \text{ and } s_f(\vec{v} -_X v) = s_d(\vec{x} -_X v). \]
\smallskip	

\noindent {\bf Case 2.} $x_{0} \in \vec{v}$. 
\smallskip	

\noindent{\bf Subcase 2.1.} $ \vec{x} - \vec{v} \,\neq \emptyset$. 
	Notice that $\vec{x} \neq ( x_{0} )$ because if not then $\vec{x} - \vec{v} = ( x_{0} )$ and so $x_{0} \notin \vec{v}$. Hence for every $t \in 2^{\omega}$ we have that
		\[s_t(x_{0}) = p(s_t(x_{1}), ..., s_t(x_{n-1})) .\]
	Suppose, without loss of generality, that $\vec{v} = (x_{0}, v_{1}, ..., v_{c-1})$ and let $\vec{x}' =  \vec{x} \cap \vec{v} = (u_{0}, ..., u_{m-1})$ and $z \in \vec{x} - \vec{v}$. Let $v \in \vec{v}$.
\smallskip	
	
\noindent	{\bf Subcase 2.1.1.} $v \neq x_{0}$.
	Let $k \in \left\{ 1, ..., c-1 \right\}$ and $v = v_{k}$. Let $t, d \in 2^{\omega}$ and let $f \in 2^{\omega}$ be such that:
	\begin{enumerate}[i)]
	 	\item $s_f(v_{k}) = s_t(v_{k})$;
		\item $s_f(v_{i}) = s_d(v_{i})$ for every $i \in \left\{ 1, ..., k-1, k+1, ..., c-1 \right\}$;
		\item $s_f(u) = 0$ for every $u \in \vec{x} - \vec{x}' z$;
		\item $s_f(z) = 0$, if $p(s_f(u_{0}), ..., s_f(u_{m-1})) = s_d(x_{0})$ and $s_f(z) = 1$ otherwise.		
\end{enumerate}
Then $f$ is such that
	\[ s_f(v_{k}) = s_t(v_{k}) \]
and
	\[ (s_f(x_{0}), s_f(v_{1}), ..., s_f(v_{k-1}), s_f(v_{k+1}), ..., s_f(v_{c-1}))  = \]
	\[ (s_d(x_{0}), s_d(v_{1}), ..., s_d(v_{k-1}), s_d(v_{k+1}), ..., s_d(v_{c-1})). \]
\smallskip	

\noindent	{\bf Subcase 2.1.2.} $v = x_{0}$.
	Let $t, d \in 2^{\omega}$ and let $f \in 2^{\omega}$ be such that:

	\begin{enumerate}[i)]
		\item $s_f(v_{i}) = s_d(v_{i})$ for every $i \in \left\{ 1, ..., c-1 \right\}$;
		\item $s_f(u) = 0$ for every $u \in \vec{x} - \vec{x}'  z$;
		\item $s_f(z) = 0$, if $p(s_f(u_{0}), ..., s_f(u_{m-1})) = s_t(x_{0})$ and $s_f(z) = 1$ otherwise.		
\end{enumerate}
Then $f$ is such that
	\[ s_f(x_{0}) = s_t(x_{0}) \]
and
	\[ (s_f(v_{1}), ..., s_f(v_{c-1})) = (s_d(v_{1}), ..., s_d(v_{c-1})). \]
\smallskip	
	
\noindent	{\bf Subcase 2.2.} $\vec{x} \subseteq \vec{v}$. 
This case is not possible. Suppose indeed it is, then by rule ($c_2.$) we can assume that $\vec{v} = \vec{x} \vec{v}'$ with $\vec{v}' \subseteq \mathrm{Var} - \vec{x}$. Thus by rule ($b_2.$) we have that $\Sigma \vdash \bota (\vec{x})$, contrary to our assumption.
\smallskip	
	
\noindent This concludes the proof of the theorem.
	
\end{proof}

We can observe that for the absolute team independence structures needed in the proof we can choose $M=\{0,1\}$ and the team can be chosen to be of cardinality $2^n$, where $n$ is the number of variables in $\Sigma\cup\{\phi\}$.
\bigskip

%%%%%%%%%%%%%%%%%%

%\subsection{Atomic Independence Logic}

 \index{Atomic!Independende Logic} {\em Atomic Independence Logic} is defined as follows.
%
%\subsubsection{Syntax}
%
	The language of this logic is made  of independence atoms as defined in the Introduction. That is, if $\vec{x}$ and $\vec{y}$ are finite sequences from a set $V$ of variables, then the atomic formula $\vec{x} \boto \vec{y}$ is a formula of the language of Atomic Independence Logic.
%	
%\subsubsection{Semantics}
%
%	The intuitive meaning of the atom $\vec{x} \boto \vec{y}$ in team semantics is that the values of the variables in $\vec{x}$ give no information about the values of the variables in $\vec{y}$ and vice versa. The semantics is defined as in \cite{GV12}.
% \begin{definition} 
%	Let $\mathcal{M}$ be a first-order structure. Let $X = \left\{ s_i \right\}_{i\in I}$ with $s_i: \mathrm{dom}(X) \rightarrow M$ and $\vec{x}  \vec{y} \subseteq \mathrm{dom}(X) \subseteq \mathrm{Var}$. We say that $\mathcal{M}$ satisfies $\vec{x} \boto \vec{y}$ under $X$, in symbols $\mathcal{M} \models_X \vec{x} \boto \vec{y}$, if
%	\[ \forall s, s' \in X \;  \exists s'' \in X \; (s''(\vec{x}) =s(\vec{x}) \wedge s''(\vec{y}) = s'(\vec{y})). \]
The deductive system $\vdash_\mathcal{I}$ consists of the following set of rules:
\begin{enumerate}[($a_3.$)]
	\item $\vec{x} \boto \emptyset$;
	\item If $\vec{x} \boto \vec{y}$, then $\vec{y} \boto \vec{x}$;
	\item If $\vec{x} \boto \vec{y}\vec{z}$, then $\vec{x} \boto \vec{y}$;
	\item If $\vec{x} \boto \vec{y}$ and $\vec{x}\vec{y} \boto \vec{z}$, then $\vec{x} \boto \vec{y}\vec{z}$;
	\item If $x \boto x$, then $x \boto \vec{y}$;
	\item If $\vec{x} \boto \vec{y}$, then $\vec{u} \boto \vec{v}$ whenever $\vu$ and $\vv$ are permutations of $\vec{x}$ and $\vec{y}$ respectively;
			\item If $\vec{x} y \vec{z} \boto \vec{w}$, then $\vec{x} y y \vec{z} \boto \vec{w}$.
	
\end{enumerate}

 We can interpret $V$ in an independence structure $\S=(I,\bot)$ with an assignment mapping  $s:V\to I$.  This induces a canonically $\vx\mapsto s(\vx)$. Then for any  finite sequence $\vx$ of elements of $V$ and any assignment $s$ we define
$$\S\models_s \vx\boto\vy\mbox{ if and only if }s(\vx)\boto s(\vy).$$ We then say that $s$ {\em satisfies} the  independence atom $\vx\boto\vy$ in $\S$.

%\subsubsection{Soundness and Completeness} 
Let $\C_{\mbox{\tiny TI}}$ be the class of {all}  team independence structures as defined in Example~\ref{teamexample}.
The following Completeness Theorem is a known result in  statistics:	
	
	\begin{theorem}[\cite{geiger1991axioms} and \cite{galliani_and_vaananen}]  The Completeness Question for $\C_{\mbox{\tiny TI}}$ has a positive answer i.e. if $\Sigma$ is a set  of  independence atoms and $\phi$ is an   independence atom, then $$\Sigma\vdash_{\mathcal{I}}\phi\iff\Sigma\models_{\C_{\mbox{\tiny TI}}} \phi.$$
\end{theorem}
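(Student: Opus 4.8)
The plan is to prove the two directions separately, with soundness routine and completeness carrying all the weight; throughout I follow the probabilistic argument of Geiger--Paz--Pearl \cite{geiger1991axioms} and its team-semantic rendering in Galliani--V\"a\"an\"anen \cite{galliani_and_vaananen}. For soundness I would check that each of the seven rules $(a_3.)$--$(g_3.)$ is validated by every team independence structure $\S_X$. Most rules are immediate from the definition of $\boto$ on teams; the only one requiring a genuine idea is $(d_3.)$ (axiom I4). There, given $\vx\boto\vy$ and $\vx\vy\boto\vz$ and arbitrary $s,s'\in X$, I would first apply $\vx\boto\vy$ to the pair $s,s'$ to produce $t\in X$ with $t\rest\vx=s\rest\vx$ and $t\rest\vy=s'\rest\vy$, and then apply $\vx\vy\boto\vz$ to the pair $t,s'$ to produce the desired witness $s''$ agreeing with $s$ on $\vx$ and with $s'$ on $\vy\vz$. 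Soundness reduces the Completeness Question to a single model-construction task.

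For completeness, assume $\Sigma\not\vdash_{\mathcal I}\phi$ with $\phi=\vx_0\boto\vy_0$, and let $W$ be the finite set of variables occurring in $\Sigma\cup\{\phi\}$. By soundness it suffices to exhibit one team $X$ over $W$ together with the identity assignment such that $X$ satisfies every atom of $\Sigma$ but falsifies $\phi$; equivalently, $X$ should validate every atom in the closure $\la\Sigma\ra$ restricted to $W$ while the projection of $X$ onto $\vx_0\vy_0$ fails to be a full Cartesian product of its projections onto $\vx_0$ and onto $\vy_0$ (this product characterization is exactly the content of the witness condition defining $\boto$). I would first dispose of the degenerate variables: any $v$ with $v\boto v\in\la\Sigma\ra$ is, by rule $(e_3.)$, independent of everything, so I make $v$ literally constant in $X$; such variables then satisfy every independence atom vacuously, in harmony with I5.

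The core of the argument is to construct, over the value set $M=\{0,1\}$, a team whose projection onto a set $\bx\by$ is a full rectangle precisely when $\Sigma\vdash\bx\boto\by$. I would build $X$ by a product decomposition guided by $\la\Sigma\ra$: derivable independencies are realized by taking Cartesian products of the teams living on the independent parts, which is legitimate because closure under I4 (rule $(d_3.)$) is exactly what makes such a laminar, recursive decomposition consistent, so that every derivable atom comes out as a genuine product and hence holds in $X$. To falsify $\phi$ I would link $\vx_0$ and $\vy_0$ by a parity gadget over $\{0,1\}$ --- in the spirit of the function $p$ used in the absolute-independence theorem above --- introducing an entanglement between the $\vx_0$-values and the $\vy_0$-values that destroys the rectangle on $\vx_0\vy_0$ while respecting all the products dictated by $\la\Sigma\ra$.

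I expect the main obstacle to be the verification that this single team simultaneously validates all of $\Sigma$ and falsifies $\phi$: one must show that the parity entanglement introduced to break $\vx_0\boto\vy_0$ does not accidentally break any derivable independence, and conversely that non-derivability $\Sigma\not\vdash\vx_0\boto\vy_0$ really does guarantee that $\vx_0$ and $\vy_0$ are not separated by the decomposition, so that the gadget can be placed at all. The cleanest way to control this is to argue through a most-refined decomposition, using symmetry $(b_3.)$ and permutation $(f_3.)$ to reduce to the case where the obstruction to $\vx_0\boto\vy_0$ is concentrated in a single connecting variable, and then to check rule by rule that the closure conditions are preserved. The interplay between I4-closure, which forces products, and the local dependence created by the gadget is where the argument is most delicate, and is precisely the point handled in detail in \cite{geiger1991axioms} and \cite{galliani_and_vaananen}.
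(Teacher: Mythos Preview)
The paper does not actually supply its own proof of this theorem: it is stated as a known result with a citation to \cite{geiger1991axioms} and \cite{galliani_and_vaananen}, followed only by the remark that inspection of that proof shows one may take $M=\{0,1\}$ and a team of size $2^n$. Your sketch follows exactly those cited sources---soundness by direct verification of the rules (with $(d_3.)$ the only nontrivial case, handled just as you indicate), and completeness by building a $\{0,1\}$-valued team via a parity/product construction---so there is nothing to compare beyond noting that your plan is the standard one the paper is pointing to.
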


Inspection of the proof reveals that  for the team needed in the above proof we can choose $M=\{0,1\}$ and the team can be chosen to be of cardinality $2^n$, where $n$ is the number of variables in $\Sigma\cup\{\phi\}$.		
\bigskip

	\index{Atomic!Conditional Independence Logic} {\em Atomic Conditional Independence Logic} is defined as follows.
%
%\subsubsection{Syntax}
%
	The language of this logic is made  of conditional independence atoms only. That is, if $\vec{x}$, $\vec{y}$  and $\vec{z}$ are finite sequences of variables, then the formula $\vec{x} \botc{\vec{z}} \vec{y}$ is a formula of the language of Atomic Conditional Independence Logic.
The deductive system $\vdash_\mathcal{CI}$ consists of the following set of rules:
\begin{enumerate}[($a_4.$)]
	\item $\vec{x} \botc{\vec{x}} \vec{y}$;
	\item If $\vec{x} \botc{\vec{z}} \vec{y}$, then $\vec{y} \botc{\vec{z}} \vec{x}$;
	\item If $\vec{x}  \vec{x}'  \botc{\vec{z}} \vec{y}  \vec{y}'$, then $\vec{x} \botc{\vec{z}} \vec{y}$;
	\item If $\vec{x} \botc{\vec{z}} \vec{y}$, then $\vec{x}  \vec{z}  \botc{\vec{z}} \vec{y}  \vec{z}$;
	\item If $\vec{x} \botc{\vec{z}} \vec{y}$ and $\vec{u} \botc{\vec{z}, \vec{x}} \vec{y}$, then $\vec{u} \botc{\vec{z}} \vec{y}$;
	\item If $\vec{y} \botc{\vec{z}} \vec{y}$ and $\vec{z}  \vec{x} \botc{\vec{y}} \vec{u}$, then $\vec{x} \botc{\vec{z}} \vec{u}$;
	\item If $\vec{x} \botc{\vec{z}} \vec{y}$ and $\vec{x}  \vec{y} \botc{\vec{z}} \vec{u}$, then $\vec{x} \botc{\vec{z}} \vec{y}  \vec{u}$;
	\item If $\vec{x} \botc{\vec{z}} \vec{y}$, then $\vec{x} \botc{\vec{z}} \vec{y}$ whenever $\vu$, $\vv$ and $\vw$ are permutations of $\vec{x}$, $\vec{z}$, and $\vec{y}$ respectively.

\end{enumerate}

 We can interpret $V$ in a conditional independence structure $\S=(I,\bot)$ with an assignment mapping  $s:V\to I$.  This induces a canonically $\vx\mapsto s(\vx)$ and we can define
$$\S\models_s \vx\botc{\vz}\vy\mbox{ if and only if }s(\vx)\botc{s(\vz)} s(\vy).$$ We then say that $s$ {\em satisfies} the conditional independence atom $\vx\botc{\vz}\vy$ in $\S$.

For the semantics
we use again the concept of team  from Example~\ref{teamexample}. Suppose $X$ is a team. We define that  $\bx \botc{\bz} \by$ holds for finite sets $\bx$, $\by$ and $\bz$ of variables if 
		\[ \forall s, s' \in X (s(\bz) = s'(\bz) \rightarrow \exists s'' \in X (s''(\bz) =s(\bz) \wedge  s''(\bx) = s(\bx)  \wedge  s''(\by) = s'(\by))). \]

This defines a conditional independence structure $\mathcal{CI}_X=(I,\bot)$   which we call a {\em  conditional team independence structure}.

%\subsubsection{Semantics}
%
%	\begin{definition} 

Let $\C_{\mbox{\tiny TCI}}$ be the class of all conditional team independence structures $\mathcal{CI}_X$.
%\subsubsection{Soundness} 
 The system Atomic Conditional Independence Logic is sound in the sense that if $\Sigma$ is a set  of absolute independence atoms and $\phi$ is a an absolute  independence atom, then $$\Sigma\vdash_{\mathcal{CI}}\phi\Rightarrow\Sigma\models_{\C_{\mbox{\tiny TCI}}} \phi.$$ The converse is not true.
	Parker and Parsaye-Ghomi \cite{Parker:1980:IIE:582250.582259} proved that it is not possible to find a \emph{finite} complete axiomatization for the conditional independence atoms. Furthermore, in \cite{herrmann_undec} and \cite{herrmann_undec_corrig} Hermann proved that the consequence relation between these atoms is undecidable. It is, a priori, obvious  that there is {\em some} recursive axiomatization for the conditional independence atoms true in all team independence structures, because we can reduce the whole question to first-order logic with extra predicates and then appeal to the Completeness Theorem of first-order logic. In \cite{naumov} Naumov and Nicholls developed an explicit recursive  axiomatization of conditional independence atoms, but it cannot be put in the form of axioms such as $(a_4.)-(h_4.)$.

%In summary, for dependence atoms and  (absolute or not) independence atoms we can answer the Completeness Question positively with a class of (in)dependence structures which is more restricted than the full class of all such structures. In each case we employed {\em team semantics}. We can interpret these results by saying that the given axioms of the relevant atoms  completely describe (in)dependence in teams, and thereby in databases, since teams are essentially databases. We now move on to richer structures.

\section{Dependence and Independence in Pregeometries}\label{preg}

In this section we consider independence and dependence structures arising from  geometric structures. Teams and databases do not in general have such structure, but algebraic structures may have as well as, in some cases, models of first-order theories.

\subsection{Closure Operator Atomic Dependence Logic}

	\begin{definition}\label{def_clo_op} Let $M$ be a set and $\mathrm{cl}: \mathcal{P}(M) \rightarrow \mathcal{P}(M)$ an operator on the power set of $M$. We say that $\mathrm{cl}$ is a \emph{closure operator} and that $(M, \mathrm{cl})$ is a \emph{closure system} if for every $A, B \subseteq M$ the following conditions are satisfied:
		\begin{enumerate}[i)]
			\item $A \subseteq \mathrm{cl}(A)$;
			\item If $A \subseteq B$ then $\mathrm{cl}(A) \subseteq \mathrm{cl}(B)$;
			\item $\mathrm{cl}(A) = \mathrm{cl}(\mathrm{cl}(A))$.
\end{enumerate}
		
\end{definition}

\noindent Given a closure system $(M, \mathrm{cl})$, we say that $A \subseteq M$ is closed if $\mathrm{cl}(A) = A$.

	\begin{example} Let $\mathcal{A}$ be an algebra, and for every $B \subseteq A$ let $[B]$ be the subalgebra of $\mathcal{A}$ generated by $B$. Then $[ \; ]: \mathcal{P}(A) \rightarrow \mathcal{P}(A)$ is a closure operator. 
				
\end{example}

	\begin{example} Let $(X, \tau)$ be a topological space, and for every $Y \subseteq X$ let $\overline{Y}$ be smallest closed subset of $X$ that contains $Y$. Then $\overline{\phantom{Y}}: \mathcal{P}(A) \rightarrow \mathcal{P}(A)$ is a closure operator. 
		
\end{example}

	\begin{example} Let $\mathcal{M}$ be a first-order structure in the signature $L$ and $A \subseteq M$. We say that $b$ is \emph{algebraic} over $A$ if there is a first-order $L$-formula $\phi (v, \vec{w})$ and $\vec{a} \in A$ such that $\mathcal{M} \models \phi (b, \vec{a})$ and $\phi (\mathcal{M}, \vec{a}) = \left\{ m \in M \; | \; \mathcal{M} \models \phi (m, \vec{a}) \right\}$ is finite. Let $\mathrm{acl}(A) = \left\{ m \in M \; | \; m \text{ is algebraic over } A \right\}$. Then $\mathrm{acl}: \mathcal{P}(M) \rightarrow \mathcal{P}(M)$ is a closure operator.
		
\end{example}

	{\em Closure Operator Atomic Dependence Logic} is defined as follows. 
The syntax and deductive system of this logic are the same as those of Atomic Dependence Logic (see Section \ref{abs_sys}).
%	
%
%	\subsubsection{Semantics}
%
%
%		\begin{definition} 
%Let $(M, \mathrm{cl}) \in \mathbf{K}$.
Given a closure system $(M, \mathrm{cl})$, we define a dependence structure $$\D_{(M, \mathrm{cl})}=(M,\rr)$$ as follows: If $\bx$ and $\by$ are finite subsets of $M$, then 
\[\bx\rr\by\mbox{ if and only if } \by \subseteq \mathrm{cl}(\bx). \] We call $\D_{(M, \mathrm{cl})}$ a {\em closure operator dependence structure}.

%Let $\Sigma$ be a set of atoms and let $s$ be such that the set of variables occurring in $\Sigma$ is included in $\mathrm{dom}(s)$. We say that $(M, \mathrm{cl})$ satisfies $\Sigma$ under $s$, in symbols $(M, \mathrm{cl}) \models_s \Sigma$, if $(M, \mathrm{cl})$  satisfies every atom in $\Sigma$ under $s$. 
%
%
%	\end{definition}
%
%		\begin{definition} 
 
%
%

	\begin{theorem}\label{compl_clo_op_dep_logic} 
Suppose $\C$ is a class of   dependence structures  such that there exists 
$\D_{(M, \mathrm{cl})} \in \C$ such that in $(M, \mathrm{cl})$ we have $\emptyset\ne\mathrm{cl}(\emptyset) \ne M$.
	 Then the Completeness Question for $\C$ has a positive answer i.e. if $\Sigma$ is a set  of dependence atoms and $\phi$ is a dependence atom, then $$\Sigma\vdash_{\mathcal{D}}\phi\iff\Sigma\models_{\C} \phi.$$
\end{theorem}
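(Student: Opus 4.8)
The plan is to prove the equivalence in two directions, with soundness immediate and completeness carrying all the content. For soundness, $\Sigma\vdash_{\mathcal D}\phi\Rightarrow\Sigma\models_{\C}\phi$, it is enough to observe that each deduction rule $(a_1.)$--$(e_1.)$ is valid in an arbitrary dependence structure: rules $(a_1.)$--$(d_1.)$ are verbatim the defining axioms D1--D4 of Definition~\ref{depdef}, and $(e_1.)$ is sound because $s(\vec z)=s(\vec x)$ as subsets of the universe whenever $\vec z$ is a permutation of $\vec x$. Since every member of $\C$ is a dependence structure, any atom derivable from $\Sigma$ holds under every assignment satisfying $\Sigma$.

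For completeness I argue the contrapositive. Suppose $\Sigma\nvdash_{\mathcal D}\phi$, where $\phi=\dep(\vec x_0,\vec y_0)$; I produce an assignment into the distinguished structure $\D_{(M,\mathrm{cl})}\in\C$ satisfying $\Sigma$ but refuting $\phi$. Using the hypotheses, fix $a\in\mathrm{cl}(\emptyset)$ (as $\mathrm{cl}(\emptyset)\neq\emptyset$) and $b\in M\setminus\mathrm{cl}(\emptyset)$ (as $\mathrm{cl}(\emptyset)\neq M$). Let $Y=\{v\in V:\Sigma\vdash_{\mathcal D}\dep(\vec x_0,v)\}$ be the syntactic closure of the left-hand side of $\phi$, and define $s\colon V\to M$ by $s(v)=a$ for $v\in Y$ and $s(v)=b$ otherwise. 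Three facts about $Y$ are needed, all by elementary rule manipulation: (i) $\vec x_0\subseteq Y$, obtained from $(a_1.)$ followed by right-projection $(b_1.)$; (ii) the closure property that $\dep(\vec u,\vec w)\in\Sigma$ and $\vec u\subseteq Y$ imply $\vec w\subseteq Y$, obtained by assembling the atoms $\dep(\vec x_0,u)$ for $u\in\vec u$ into $\dep(\vec x_0,\vec u)$ via $(d_1.)$ and $(e_1.)$ and then composing with $\dep(\vec u,\vec w)$ through transitivity $(c_1.)$ (in the boundary case $\vec u=\emptyset$ one applies $(b_1.)$ directly to the $\Sigma$-atom); and (iii) some entry $y$ of $\vec y_0$ satisfies $y\notin Y$, since otherwise $(d_1.)$ and $(e_1.)$ would yield $\Sigma\vdash\dep(\vec x_0,\vec y_0)$.

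It remains to check the semantics in $\D_{(M,\mathrm{cl})}$, where $\bx\rr\by$ abbreviates $\by\subseteq\mathrm{cl}(\bx)$. Note that $\mathrm{Im}(s)\subseteq\{a,b\}$, that $\mathrm{cl}(\{a\})=\mathrm{cl}(\emptyset)$ (by $a\in\mathrm{cl}(\emptyset)$ together with conditions (i)--(iii) of Definition~\ref{def_clo_op}), and that $\{a,b\}\subseteq\mathrm{cl}(\{b\})$ (since $a\in\mathrm{cl}(\emptyset)\subseteq\mathrm{cl}(\{b\})$). For an atom $\dep(\vec u,\vec w)\in\Sigma$: if $\vec u\not\subseteq Y$ then $b\in s(\vec u)$, so $\mathrm{cl}(s(\vec u))\supseteq\{a,b\}\supseteq s(\vec w)$ and the atom holds trivially; if $\vec u\subseteq Y$ then $s(\vec u)\subseteq\{a\}$, so $\mathrm{cl}(s(\vec u))=\mathrm{cl}(\emptyset)$, and fact (ii) gives $\vec w\subseteq Y$, whence $s(\vec w)\subseteq\{a\}\subseteq\mathrm{cl}(\emptyset)$. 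Thus $\D_{(M,\mathrm{cl})}\models_s\Sigma$. For $\phi$, fact (i) gives $s(\vec x_0)\subseteq\{a\}$ and $\mathrm{cl}(s(\vec x_0))=\mathrm{cl}(\emptyset)$, while the witness $y$ from (iii) has $s(y)=b\notin\mathrm{cl}(\emptyset)$, so $s(\vec y_0)\not\subseteq\mathrm{cl}(s(\vec x_0))$ and $\D_{(M,\mathrm{cl})}\not\models_s\phi$. This exhibits $\Sigma\not\models_{\C}\phi$, completing the contrapositive.

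The one genuinely delicate point is the construction of the assignment: since $\mathrm{cl}$ is prescribed and cannot be chosen, the usual two-row Armstrong team cannot be copied literally. The idea that resolves this is that a two-valued assignment suffices -- collapsing the whole syntactic closure $Y$ onto a constant $a\in\mathrm{cl}(\emptyset)$ and everything else onto a single generic point $b\notin\mathrm{cl}(\emptyset)$ -- so that any dependency whose premise meets $V\setminus Y$ becomes vacuous (its closure already contains all of $\mathrm{Im}(s)$), while dependencies with premise inside $Y$ are governed entirely by the purely syntactic property (ii). The two hypotheses on $\mathrm{cl}(\emptyset)$ are used for precisely nothing more than the existence of $a$ and $b$. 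I expect the verification of (ii) and the empty-sequence boundary cases to be the only places demanding care.
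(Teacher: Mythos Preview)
Your proof is correct and follows essentially the same approach as the paper: define the syntactic closure $Y=\{v:\Sigma\vdash\dep(\vec x_0,v)\}$ (the paper's set $V$), send $Y$ to a point $a\in\mathrm{cl}(\emptyset)$ and its complement to a point $b\notin\mathrm{cl}(\emptyset)$, and verify the semantics by the same case split on whether the left-hand side of a $\Sigma$-atom lies inside $Y$. Your organization is slightly cleaner---you fold the paper's three cases into two by treating $\vec u=\emptyset$ as an instance of $\vec u\subseteq Y$---but the argument is otherwise identical.
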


		\begin{proof} Soundness is easy. Regarding completeness, suppose $\Sigma \nvdash \dep(\vec{x}, \vec{y})$. Let $V = \left\{ z \in \mathrm{Var} \;|\; \Sigma \vdash \dep(\vec{x}, z) \right\}$ and $W = \mathrm{Var} - V$. Remember that in the definition of the syntax of this system we ask that for every atom $\dep(\vec{v}, \vec{w})$ we have that $\vec{w} \neq \emptyset$ whenever $\vec{v} \neq \emptyset$. Thus, $\dep(\vec{x}, \vec{y})$ is so that $\vec{y} \neq \emptyset$, because otherwise $\vec{x}, \vec{y} = \emptyset$, and so, by the admitted degenerate case of rule ($a_1.$), we have that $\Sigma \vdash \dep(\vec{x}, \vec{y})$. Furthermore $\vec{y} \cap W \neq \emptyset$, which can be seen as follows. If $\vec{y} \cap W = \emptyset$, then for every $y \in \vec{y}$ we have that $\Sigma \vdash \dep(\vec{x}, y)$, and so by rule ($d_1.$) we have that $\Sigma \vdash \dep(\vec{x}, \vec{y})$. 
\smallskip

\noindent	By assumption there is $\D_{(M, \mathrm{cl})} \in \C$ such that $\emptyset\ne\mathrm{cl}(\emptyset) \ne M$ in $(M, \mathrm{cl})$, so there are $a, b \in M$ with $a \in \mathrm{cl}(\emptyset)$ and $b \notin \mathrm{cl}(\emptyset) = \mathrm{cl}(\left\{ a \right\})$. Let $s$ be the following assignment:
			
			\[s(v) = \begin{cases} a \;\;\;\;\; \text{ if } v \in V \\
								   b \;\;\;\;\; \text{ if } v \in W.

		\end{cases}\]
	We claim that $\D_{(M, \mathrm{cl})} \not\models_s \dep(\vec{x}, \vec{y})$. In accordance to the semantic we have to show that there is $y \in \vec{y}$ such that $s(y) \notin \mathrm{cl}(\left\{ s(x) \; | \; x \in \vec{x} \right\})$. Let $y \in \vec{y} \cap W$, then 
			\[s(y) = b \notin \mathrm{cl}(\left\{ a \right\}) = \mathrm{cl}(\left\{ s(x) \; | \; x \in \vec{x} \right\}), \]
	because for $x \in \vec{x}$ we have that $\Sigma \vdash \dep(\vec{x}, x)$. Indeed by rule ($a_1.$) $\vdash \dep(\vec{x}, \vec{x})$ and so by rule ($b_1.$) $\vdash \dep(\vec{x}, x)$. Notice that in the case $\vec{x} = \emptyset$, we have that
		   \[ s(y) = b \notin \mathrm{cl}(\left\{ a \right\}) = \mathrm{cl}(\emptyset) = \mathrm{cl}(\left\{ s(x) \; | \; x \in \vec{x} \right\}). \]
			Let now $\dep(\vec{x}', \vec{y}') \in \Sigma$, we want to show that $\D_{(M, \mathrm{cl})}\models_s \dep(\vec{x}', \vec{y}')$. If $\vec{y}' = \emptyset$ then also $\vec{x}' = \emptyset$, and so trivially $\D_{(M, \mathrm{cl})} \models_s \dep(\vec{x}', \vec{y}')$. Having noticed this, for the rest of the proof we assume $\vec{y}' \neq \emptyset$.
\smallskip
		
\noindent	{\bf Case 1.} $\vec{x}' = \emptyset$.
			Suppose that $\D_{(M, \mathrm{cl})}\not\models_s \dep(\emptyset,  \vec{y}')$, then there exists $y' \in \vec{y}'$ such that $s(y') = b$, so $\Sigma \nvdash \dep(\vec{x}, y')$. Notice though that $\Sigma \vdash \dep(\emptyset, \vec{y}')$, so by rule ($b_1.$) $\Sigma \vdash \dep(\emptyset, y')$ and hence again by rule ($b_1.$) $\Sigma \vdash \dep(\vec{x}, y')$.
\smallskip

\noindent	{\bf Case 2.} $\vec{x}' \neq  \emptyset$ and $\vec{x}' \subseteq V$. 
			If this is the case, then 
		\[	\begin{array}{rcl}
					\forall x' \in \vec{x}' \;\; \Sigma \vdash \dep(\vec{x}, x') 
							& \Longrightarrow 		& \Sigma \vdash \dep(\vec{x}, \vec{x}') \;\;\;\hspace{-0.05em} \text{[by rule ($d_1.$)]} \\
							& \Longrightarrow 		& \Sigma \vdash \dep(\vec{x}, \vec{y}') \;\;\; \text{[by rule ($c_1.$)]}\\
							& \Longrightarrow 		&  \forall y' \in \vec{y}' \;\; \Sigma \vdash \dep(\vec{x}, y') \;\;\; \text{[by rule ($b_1.$)]}\\
							& \Longrightarrow 		& \vec{y}' \subseteq {V}.
				\end{array} \]
	If $\vec{x}' \subseteq V$ then for every $x' \in \vec{x}'$ we have that $s(x') = a$, so $\mathrm{cl}(\left\{ s(x') \; | \; x' \in \vec{x}' \right\}) = \mathrm{cl}(\left\{ a \right\})$. Let $y' \in \vec{y}'$, then we have that $s(y') = a$ and clearly $a \in \mathrm{cl}(\left\{ a \right\})$.
\smallskip

\noindent	{\bf Case 3.} $\vec{x}' \cap W \neq \emptyset$. 
			If this is the case, then there exists $w \in \vec{x}'$ such that $\Sigma \nvdash \dep(\vec{x}, w)$. Thus, we have $w \in \vec{x}'$ such that $s(w) = b$, and so $\mathrm{cl}(\left\{ s(x') \; | \; x' \in \vec{x}' \right\}) = \mathrm{cl}(\left\{ b \right\})$. Let now $y' \in \vec{y}'$, then either $s(y') = a$ or $s(y') = b$, but in both cases we have that $s(y') \in \mathrm{cl}(\left\{ b \right\}) = \mathrm{cl}(\left\{ s(x') \; | \; x' \in \vec{x}' \right\})$.
\smallskip	
			
\noindent This concludes the proof of the theorem.

\end{proof}

\subsection{Pregeometries}\label{pregeometries}

	Noticing various similarities in which the notion of dependence occurs in linear algebra, field theory and graph theory, in the mid 1930's, Hassler Whitney \cite{whitney} and Bartel Leendert van der Waerden \cite{waerden} independently identified  a few conditions capable to subsume all these cases of dependence. This led to the definition of the notion of {\em abstract dependence relation}, also known as {\em matroid}. In the 1970's, Giancarlo Rota  and Henry H. Crapo \cite{rota} introduced the term {\em pregeometry}. Although strictu sensu the two terms are synonymous, sometimes mathematicians refer to finite pregeometries as matroids. Finite matroids can be characterized in several equivalent ways, but these equivalences fail in the infinite setting. Thus, the general definition of a pregeometry generalizes only one of the aspects of these finite objects. In the model-theoretic community the term pregeometry is preferred, probably because of the focus on infinite structures. 
	
	\begin{definition}\label{def_pregeo} Let $M$ be a set and $\mathrm{cl}: \mathcal{P}(M) \rightarrow \mathcal{P}(M)$ a closure operator on the power set of $M$. We say that $(M, \mathrm{cl})$ is a \emph{pregeometry} if for every $A, B \subseteq M$ and $a, b \in M$ the following conditions are satisfied:
		\begin{enumerate}[i)]
			\item if $a \in \mathrm{cl}(A \cup \left\{ b \right\}) - \mathrm{cl}(A)$, then $b \in \mathrm{cl}(A \cup \left\{ a \right\})$ \; [Exchange Principle];
			\item if $a \in \mathrm{cl}(A)$, then $a \in \mathrm{cl}(A_0)$ for some $A_0 \subseteq_{\omega} A$ \; [Finite Character].
\end{enumerate}
		
\end{definition} 
	
	\begin{example}\label{span_is_pregeo} Let $\mathbb{K}$ be a field and $\mathbb{V}$ be a vector space over $\mathbb{K}$. For every $A \subseteq V$ let $\langle A \rangle$ be the smallest subspace of $\mathbb{V}$ containing $A$, i.e. the subspace of $\mathbb{V}$ spanned by $A$. Then $(V, \langle \, \rangle)$ is a pregeometry. %Among the conditions defining a pregeometry we only show that the Exchange Principle is satisfied. %Suppose that $a \in \langle A \cup \left\{ b \right\} \rangle - \langle A \rangle$ then there exists $\vec{c} \in K$ and $d \neq 0 \in K$ such that $a = \sum_{i=0}^{n-1} c_i a_i + db$, so $b = \frac{a}{d} - \sum_{i=0}^{n-1} \frac{c_i}{d} a_i$. Hence $b \in \langle A \cup \left\{ a \right\} \rangle$.
		
\end{example}

	\begin{example}\label{acl_is_pregeo} Let $\mathbb{K}$ be an algebraically closed field and, for $a \in K$ and $A \subseteq K$, let $a \in \mathrm{acl}(A)$ if $a$ is algebraic over the subfield of $\mathbb{K}$ generated by $A$. Then $(K, \mathrm{acl})$ is a pregeometry, see for example \cite{milne}.
		
\end{example}

	\begin{definition} Let $(M, \mathrm{cl})$ be a pregeometry. 
		\begin{enumerate}[i)]
			\item We say that $(M, \mathrm{cl})$ is a {\em geometry} if $\mathrm{cl}(\emptyset) = \emptyset$ and $\mathrm{cl}(\left\{ m \right\}) = \left\{ m \right\}$ for all $m \in M$.
			\item We say that $(M, \mathrm{cl})$ is {\em trivial} if $\mathrm{cl}(A) = \bigcup_{a \in A}\mathrm{cl}(\left\{ a \right\})$ for any $A \subseteq M$.
		\end{enumerate}
		
\end{definition}

	\begin{definition} Let $(M, \mathrm{cl})$ be a pregeometry and $A \subseteq M$. We say that $A$ is independent if for all $a \in A$ we have $a \notin \mathrm{cl}(A - \left\{ a \right\})$. Let $(M, \mathrm{cl})$ be a pregeometry and $B \subseteq A \subseteq M$. We say that $B$ is a \emph{basis} for $A$ if $B$ is independent and $A \subseteq \mathrm{cl}(B)$. 
		
\end{definition} \index{basis}

 The following lemma is well-known (see e.g. \cite[Lemma~8.1.3]{marker}).

	\begin{lemma} Let $(M, \mathrm{cl})$ be a pregeometry and $A, B, C \subseteq M$ with $A \subseteq C$ and $B \subseteq C$. If $A$ and $B$ are bases for $C$, then $|A| = |B|$.
				
\end{lemma}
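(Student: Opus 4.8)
The plan is to deduce the lemma from a single comparison principle: \emph{if $P \subseteq M$ is independent and $P \subseteq \mathrm{cl}(Q)$, then $|P| \le |Q|$}. Granting this, observe that since $B$ is a basis for $C$ we have $A \subseteq C \subseteq \mathrm{cl}(B)$, and $A$ is independent, so $|A| \le |B|$; symmetrically, from $B \subseteq C \subseteq \mathrm{cl}(A)$ together with the independence of $B$ we obtain $|B| \le |A|$. The Cantor--Schr\"oder--Bernstein theorem then yields $|A| = |B|$. So everything reduces to the comparison principle.

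The heart of the matter, and the main obstacle, is the finite case of the comparison principle, i.e.\ the Steinitz exchange argument: if $Q$ is finite, then every finite independent $\{a_1,\dots,a_n\} \subseteq \mathrm{cl}(Q)$ satisfies $n \le |Q|$. I would prove this by induction on $k \le n$, maintaining a subset $T_k \subseteq Q$ with $|T_k| = |Q| - k$ and $\mathrm{cl}(\{a_1,\dots,a_k\} \cup T_k) \supseteq \mathrm{cl}(Q)$. For the inductive step one has $a_{k+1} \in \mathrm{cl}(\{a_1,\dots,a_k\}\cup T_k)$ while $a_{k+1} \notin \mathrm{cl}(\{a_1,\dots,a_k\})$, the latter because $\{a_1,\dots,a_{k+1}\}$ is independent; adjoining the elements of $T_k$ one at a time and locating the first stage at which the closure captures $a_{k+1}$, the Exchange Principle lets one trade the responsible element $b \in T_k$ for $a_{k+1}$, producing $T_{k+1} = T_k \setminus \{b\}$ with the same spanning property. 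Since $|T_k| \ge 0$ for every $k \le n$, we conclude $|Q| \ge n$. In particular, when $Q$ is finite any independent subset of $\mathrm{cl}(Q)$ is finite of size $\le |Q|$.

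Finally I would bootstrap to the case of infinite $Q$ using the Finite Character axiom. For each $p \in P$ choose a finite $Q_p \subseteq Q$ with $p \in \mathrm{cl}(Q_p)$. Partitioning $P$ according to the value of $Q_p$, each block $\{p \in P : Q_p = Q_0\}$ is an independent subset of $\mathrm{cl}(Q_0)$ with $Q_0$ finite, hence of size $\le |Q_0| < \aleph_0$ by the finite case. As $Q$ is infinite it has exactly $|Q|$ finite subsets, so summing over the blocks gives $|P| \le |Q| \cdot \aleph_0 = |Q|$. This establishes the comparison principle in all cases and completes the proof.

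The only genuinely delicate point is the exchange induction of the second paragraph, where the Exchange Principle is essential; everything around it is routine bookkeeping with the closure axioms (monotonicity and idempotence), Finite Character, and cardinal arithmetic.
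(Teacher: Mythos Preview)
Your proof is correct and is precisely the standard Steinitz exchange argument (together with the Finite Character bootstrap for the infinite case) that one finds in the reference the paper cites; the paper itself does not give a proof but simply refers to \cite[Lemma~8.1.3]{marker}. So you have supplied exactly what the paper omits, and your approach matches the referenced one.
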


	\begin{definition} Let $(M, \mathrm{cl})$ be a pregeometry and $A \subseteq M$. The \emph{dimension} of $A$ is the cardinality of a basis for $A$. We let $\mathrm{dim}(A)$ denote the dimension of $A$. 
		
\end{definition} \index{dimension}

	If $(M, \mathrm{cl})$ is a pregeometry and $C \subseteq M$, we also consider the \emph{localization} $\mathrm{cl}_C(A) = \mathrm{cl}(C \cup A)$ for $A \subseteq M$. It is easy to see that $(M, \mathrm{cl}_C)$ is also a pregeometry.

	\begin{definition} Let $(M, \mathrm{cl})$ be a pregeometry and $A, C \subseteq M$. We say that $A$ is independent over $C$ if $A$ is independent in $(M, \mathrm{cl}_C)$ and that $B \subseteq A$ is a basis for $A$ over $C$ if $B$ is a basis for $A$ in $(M, \mathrm{cl}_C)$. We let $\mathrm{dim}(A/C)$ be the dimension of $A$ in $(M, \mathrm{cl}_C)$ and call $\mathrm{dim}(A/C)$ the dimension of $A$ over $C$.
		
\end{definition}

	The notion of dimension that we have been dealing with allows us to define an independence relation with many desirable properties.

	\begin{definition}\label{3.14} Let $(M, \mathrm{cl})$ be a pregeometry, $A, B, C \subseteq M$. We say that $A$ is independent of $C$ over $B$ if for every $\vec{a} \in A$ we have $\mathrm{dim}(\vec{a}/B \cup C) = \mathrm{dim}(\vec{a}/B)$. In this case we write $A \clindep[B] C$.
		
\end{definition}

	\begin{lemma}\label{prop_clindep} Let $(M, \mathrm{cl})$ be a pregeometry and $A, B, C, D \subseteq M$. Then
	\begin{enumerate}[i)]
	\item $A \clindep[A] B$ \; [Existence];
	\item if $A \clindep[C] B$ and $D \subseteq B$, then $A \clindep[C] D$ \; [Monotonicity];
	\item $A \clindep[C] B \cup D$ if and only if $A \clindep[C] B$ and $A \clindep[C \cup B] D$ \; [Transitivity];
	\item $A \clindep[C] B$ if and only if $A \clindep[C] B_0$ for all finite $B_0 \subseteq B$ \; [Finite Character];
	\item if $A \clindep[C] B$, then $B \clindep[C] A$ \; [Symmetry];
	\item if $A \clindep[D] B$ and $A \cup B \clindep[D] C$, then $A \clindep[D] B \cup C$ \; [Exchange];
	\item if $A \clindep[B] A$, then $A \clindep[B] E$ for any $E \subseteq M$ \; [Anti-Reflexivity].
	
\end{enumerate}

\end{lemma}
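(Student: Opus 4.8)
The plan is to reduce every item to an elementary statement about the function $C \mapsto \mathrm{dim}(\vec{a}/C)$, using two facts that follow immediately from the closure axioms: first, enlarging the base can only lower dimension, i.e.\ $C \subseteq C'$ implies $\mathrm{dim}(\vec{a}/C') \leq \mathrm{dim}(\vec{a}/C)$, since $\mathrm{cl}_C(X) \subseteq \mathrm{cl}_{C'}(X)$; and second, $\mathrm{dim}(\vec{a}/C) = 0$ exactly when $\vec{a} \subseteq \mathrm{cl}(C)$. With these, Existence (i) and Anti-Reflexivity (vii) are dimension-zero observations: if $\vec{a} \in A$ then $\vec{a} \subseteq \mathrm{cl}(A) \subseteq \mathrm{cl}(A \cup B)$, so both dimensions in (i) vanish; and $A \clindep[B] A$ forces $\mathrm{dim}(\vec{a}/B) = \mathrm{dim}(\vec{a}/B \cup A) = 0$, hence $A \subseteq \mathrm{cl}(B)$, so that $\mathrm{dim}(\vec{a}/B \cup E) = 0 = \mathrm{dim}(\vec{a}/B)$ for every $E$. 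Monotonicity (ii) and Transitivity (iii) are then pure sandwiching: the monotonicity of dimension gives the chain $\mathrm{dim}(\vec{a}/C) \geq \mathrm{dim}(\vec{a}/C \cup D) \geq \mathrm{dim}(\vec{a}/C \cup B)$ for (ii), and an analogous chain through $C \cup B$ for (iii), so equality of the two ends forces equality throughout.

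For Finite Character (iv) the forward direction is immediate from Monotonicity. For the converse I would argue contrapositively: if $A \not\clindep[C] B$, pick a finite $\vec{a} \in A$ with $\mathrm{dim}(\vec{a}/C \cup B) < \mathrm{dim}(\vec{a}/C) =: d$, and a basis $\vec{a}_0 \subseteq \vec{a}$ of $\vec{a}$ over $C$, so $|\vec{a}_0| = d$. Since the dimension drops, $\vec{a}_0$ is dependent over $C \cup B$, so some $a_i \in \vec{a}_0$ lies in $\mathrm{cl}(C \cup B \cup (\vec{a}_0 - a_i))$. Here the Finite Character axiom of the pregeometry enters, yielding a \emph{finite} $B_0 \subseteq B$ with $a_i \in \mathrm{cl}(C \cup B_0 \cup (\vec{a}_0 - a_i))$; then $\vec{a}_0 - a_i$ already spans $\vec{a}$ over $C \cup B_0$, so $\mathrm{dim}(\vec{a}/C \cup B_0) \leq d-1 < d$, i.e.\ $A \not\clindep[C] B_0$, as required.

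The substantial case is Symmetry (v), and its engine is the additivity formula $\mathrm{dim}(X \cup Y / C) = \mathrm{dim}(X/C) + \mathrm{dim}(Y/C \cup X)$ for finite $X,Y$. I would prove this by taking a basis $X_0$ of $X$ over $C$ and a basis $Y_0$ of $Y$ over $C \cup X$ and showing $X_0 \cup Y_0$ is a disjoint basis of $X \cup Y$ over $C$: spanning is routine from idempotence and monotonicity of $\mathrm{cl}$, and disjointness holds because $Y_0$ avoids $\mathrm{cl}(C \cup X) \supseteq X_0$. The one genuinely delicate point — and the main obstacle — is that $X_0 \cup Y_0$ is independent over $C$: a putative dependence involving some $x \in X_0$ reads $x \in \mathrm{cl}(C \cup (X_0 - x) \cup Y_0) \setminus \mathrm{cl}(C \cup (X_0 - x))$, and the Exchange Principle must be invoked to transfer the dependence onto some $y \in Y_0$, contradicting the independence of $Y_0$ over $C \cup X$. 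Granting additivity, Symmetry for finite $\vec{a}, \vec{b}$ is a one-line computation: expanding $\mathrm{dim}(\vec{a}\vec{b}/C)$ in the two orders and cancelling $\mathrm{dim}(\vec{a}/C) = \mathrm{dim}(\vec{a}/C \cup \vec{b})$ gives $\mathrm{dim}(\vec{b}/C \cup \vec{a}) = \mathrm{dim}(\vec{b}/C)$. The general case follows by applying this to all finite $\vec{a} \in A$, $\vec{b} \in B$ and combining Finite Character (iv) with Monotonicity (ii) to pass between sets and their finite subsets.

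Finally, Exchange (vi) I would derive purely formally from items already proved, with no further appeal to the geometry: from $A \cup B \clindep[D] C$, Symmetry gives $C \clindep[D] A \cup B$, and Transitivity (splitting $A \cup B$ as $B \cup A$) extracts $C \clindep[D \cup B] A$; Symmetry again yields $A \clindep[D \cup B] C$, which together with the hypothesis $A \clindep[D] B$ and one more application of Transitivity gives $A \clindep[D] B \cup C$. Thus the Exchange Principle and Finite Character of the pregeometry are really used only inside the additivity formula and in item (iv); everything else is bookkeeping with dimensions.
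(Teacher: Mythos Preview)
Your argument is correct and complete. The paper itself gives no proof of this lemma; it simply cites Grossberg and Lessmann, \emph{Dependence Relation in Pregeometries}. Your self-contained derivation --- reducing (i), (ii), (iii), (vii) to monotonicity of $C \mapsto \dim(\vec a/C)$, handling (iv) via the Finite Character axiom of the pregeometry, obtaining (v) from the additivity formula $\dim(X\cup Y/C)=\dim(X/C)+\dim(Y/C\cup X)$ (with Exchange used exactly where you flag it), and deriving (vi) purely from (iii) and (v) --- is the standard route and is essentially what one finds in that reference. So there is nothing to compare: you have supplied the proof the paper outsourced.
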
  

\begin{proof} See \cite{grossberg}.

\end{proof}

	The following well-known lemma will be relevant in the proof of Theorem \ref{compl_pregeom_atom_indep_logic}.

\begin{lemma}\label{lemma_indep_seq} Let $(M, \mathrm{cl})$ be a pregeometry $C \subseteq M$ and $\left\{ a_i \; | \; i \in I \right\} \subseteq M$ an independent set over $C$. Then for all $A, B \subseteq_{\omega} \left\{ a_i \; | \; i \in I \right\}$ with $A \cap B = \emptyset$ we have that $A \clindep[C] B$.
		
\end{lemma}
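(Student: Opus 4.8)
The plan is to reduce immediately to the case $C=\emptyset$ by passing to the localization $(M,\mathrm{cl}_C)$. The text already notes that $(M,\mathrm{cl}_C)$ is again a pregeometry, and by definition ``$\{a_i\mid i\in I\}$ is independent over $C$'' means precisely that it is an independent set in $(M,\mathrm{cl}_C)$. Moreover the localization behaves well under iteration: $(\mathrm{cl}_C)_B(X)=\mathrm{cl}(C\cup B\cup X)=\mathrm{cl}_{C\cup B}(X)$, so the dimension of a tuple $\vec a$ over $B$ computed in $(M,\mathrm{cl}_C)$ coincides with $\mathrm{dim}(\vec a/C\cup B)$ computed in $(M,\mathrm{cl})$, and its dimension over $\emptyset$ in $(M,\mathrm{cl}_C)$ coincides with $\mathrm{dim}(\vec a/C)$. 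Hence it suffices to prove the following: if $S$ is an independent set in a pregeometry and $A,B\subseteq_{\omega}S$ are disjoint, then $A\clindep[\emptyset]B$.

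Next I would unwind Definition~\ref{3.14} in this reduced setting. What must be shown is that for every tuple $\vec a$ with entries in $A$ we have $\mathrm{dim}(\vec a/B)=\mathrm{dim}(\vec a)$. Writing $A'$ for the (finite) set of entries of $\vec a$, so $A'\subseteq A\subseteq S$, both sides are dimensions of $A'$, one taken over $B$ and one over $\emptyset$, so the whole lemma comes down to showing that passing from $\emptyset$ to $B$ as a base set does not change the dimension of $A'$.

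The key combinatorial fact I would isolate is: if $S$ is independent and $S=S_1\sqcup S_2$ is a partition into disjoint pieces, then $S_1$ is independent over $S_2$. This is immediate from the definitions, since for $a\in S_1$ one has $\mathrm{cl}_{S_2}(S_1\setminus\{a\})=\mathrm{cl}(S_2\cup(S_1\setminus\{a\}))=\mathrm{cl}(S\setminus\{a\})$, and $a\notin\mathrm{cl}(S\setminus\{a\})$ because $S$ is independent. I would also use the routine observation that any subset of an independent set is independent (by monotonicity of $\mathrm{cl}$). Applying the partition fact to $S_1=A'$ and $S_2=B$ inside the independent set $A'\cup B\subseteq S$ shows that $A'$ is independent over $B$, so $A'$ is a basis for itself over $B$ and $\mathrm{dim}(\vec a/B)=|A'|$; likewise $A'$ is independent over $\emptyset$, giving $\mathrm{dim}(\vec a)=|A'|$. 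The two dimensions agree, which is exactly $A\clindep[\emptyset]B$, and unwinding the localization yields $A\clindep[C]B$.

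I do not expect a genuine obstacle here: the content is entirely the partition lemma for independent sets together with the fact that an independent set is a basis for itself (so its dimension equals its cardinality). The only points requiring care are bookkeeping ones — correctly matching the subscript/argument convention of $\clindep$ against the roles in Definition~\ref{3.14}, and checking that the localization identities relating $\mathrm{cl}_C$, $\mathrm{cl}_{C\cup B}$ and the various dimensions are applied in the right order.
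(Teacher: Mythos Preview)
Your argument is correct: the reduction to $C=\emptyset$ via localization is valid, and the partition fact for independent sets together with ``an independent set is its own basis'' gives exactly the equality of dimensions needed in Definition~\ref{3.14}. The paper itself does not supply a proof of this lemma, stating only that it is well-known, so there is no alternative argument to compare against; your write-up would serve as a perfectly adequate proof.
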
 

 	Our focus in this paper will be on classes of pregeometries in which there exists $(M, \mathrm{cl})$ with the following  three properties:
\begin{enumerate}[(P1)]
	\item $\mathrm{cl}(\emptyset) \neq \emptyset$;
	\item for every independent $D_0 \subseteq_{\omega} M$, $\mathrm{cl}(D_0) \neq \bigcup_{D \subsetneq D_0} \mathrm{cl}(D)$;	
	\item $\mathrm{dim}(M) \geq \omega$.
	
\end{enumerate}
Notice that conditions (P1) and (P2) put some relevant (but reasonable) restrictions on the pregeometry of $(M, \mathrm{cl})$. Indeed, condition (P1) prohibits that the pregeometry is a geometry, while condition (P2) can be seen as a strong form of non-triviality. It is easy to see that in the case of vector spaces and algebraically closed conditions (P1) and (P2) are alway satisfied.

\subsection{Pregeometry Atomic Independence Logic}

\index{Pregeometry Atomic!Absolute Independence Logic} {\em Pregeometry Atomic Absolute Independence Logic} is defined as follows. 
The syntax and deductive system of this logic are the same as those of Atomic Absolute Independence Logic (see Section \ref{abs_sys}). 
Given a pregeometry $(M, \mathrm{cl})$ we   define an absolute independence structure $$\I_{(M, \mathrm{cl})}=(M,\A)$$ by letting 
$\A$ consists of all finite subsets $\bx$ of $M$ such that $$\forall y \in \bx 
( y \notin \mathrm{cl}(\bx - \left\{ y \right\})). $$  We call $\I_{(M, \mathrm{cl})}$ an {\em absolute pregeometry independence structure}. 
Semantics is defined by  $$\I_{(M, \mathrm{cl})} \models_s \bota (\vec{x})\mbox{ iff } s(\vx)\in\A.$$

%
%\subsubsection{Semantics}\label{semantics_pre_abs_indep}
%

%	\begin{definition} 
%\newline 

%
%\end{definition}
%
%\begin{definition}
%
%\end{definition}
%
%\subsubsection{Soundness and Completeness} 
%

	\begin{theorem}\label{compl_PGAAIndL} 
Suppose $\C$ is a class of  absolute  independence structures including a structure $\I_{(M, \mathrm{cl})}$ with $(M, \mathrm{cl})$ satisfying (P1)-(P3).
	 Then the Completeness Question for $\C$ has a positive answer i.e. if $\Sigma$ is a set  of absolute  independence atoms and $\phi$ is an absolute independence atom, then $$\Sigma\vdash_{\mathcal{AI}}\phi\iff\Sigma\models_{\C} \phi.$$
\end{theorem}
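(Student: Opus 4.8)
The plan is to prove soundness routinely and to establish completeness in contrapositive form by building a single countermodel inside the distinguished structure $\I_{(M,\mathrm{cl})}$. Soundness amounts to checking that rules $(a_2.)$--$(c_2.)$ preserve truth in every absolute independence structure: $(a_2.)$ holds because $\emptyset\in\A$, $(b_2.)$ because $\A$ is closed under subsets, and $(c_2.)$ because $s(\vec{x})$ depends only on the underlying set of variables. For the converse it suffices to show that if $\Sigma\nvdash_{\mathcal{AI}}\bota(\vec{x})$ then there is an assignment $s$ into $\I_{(M,\mathrm{cl})}\in\C$ with $\I_{(M,\mathrm{cl})}\models_s\Sigma$ but $\I_{(M,\mathrm{cl})}\not\models_s\bota(\vec{x})$.

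First I would record the syntactic shape of derivability. Since the only rules are permutation $(c_2.)$ and downward closure $(b_2.)$, together with the axiom $\bota(\emptyset)$, an easy induction shows that $\Sigma\vdash_{\mathcal{AI}}\bota(\vec{w})$ holds exactly when $\vec{w}=\emptyset$ or the set of variables occurring in $\vec{w}$ is contained in the set of variables occurring in some atom $\bota(\vec{v})\in\Sigma$. In particular, writing $\vec{x}=(x_0,\dots,x_{n-1})$, the assumption $\Sigma\nvdash_{\mathcal{AI}}\bota(\vec{x})$ forces $n\geq 1$ and guarantees that for every $\bota(\vec{v})\in\Sigma$ the set $\{x_0,\dots,x_{n-1}\}$ is not contained in the variable set of $\vec{v}$.

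Next I would build $s$. Using (P3) fix an infinite independent set of $(M,\mathrm{cl})$, let $a_0,\dots,a_{n-2}$ be distinct members of it, and send every variable other than $x_{n-1}$ injectively to distinct members, so that the images of all these variables form one independent set $E\supseteq\{a_0,\dots,a_{n-2}\}$. Applying (P2) to the independent set $\{a_0,\dots,a_{n-2}\}$ (which for $n=1$ degenerates to $\emptyset$ and reduces to (P1)) furnishes an element $a_{n-1}\in\mathrm{cl}(\{a_0,\dots,a_{n-2}\})$ lying outside $\mathrm{cl}(D)$ for every proper $D\subsetneq\{a_0,\dots,a_{n-2}\}$; set $s(x_{n-1})=a_{n-1}$. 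Because $a_{n-1}\in\mathrm{cl}(\{a_0,\dots,a_{n-2}\})=\mathrm{cl}(s(\vec{x})-\{a_{n-1}\})$, the set $s(\vec{x})$ is not independent, so $\I_{(M,\mathrm{cl})}\not\models_s\bota(\vec{x})$.

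The heart of the argument, and the step I expect to be the main obstacle, is to verify that $s(\vec{v})$ is independent for every atom of $\Sigma$. I would prove the sharper statement that, for any sequence $\vec{v}$ of distinct variables, $s(\vec{v})$ is independent if and only if $\{x_0,\dots,x_{n-1}\}\not\subseteq\vec{v}$. If $x_{n-1}\notin\vec{v}$ then $s(\vec{v})\subseteq E$ is independent. If $x_{n-1}\in\vec{v}$, put $F=s(\vec{v})\cap E$; since $E$ is independent and adjoining to an independent set an element outside its closure preserves independence, the question reduces to the key lemma: for $F\subseteq E$ one has $a_{n-1}\in\mathrm{cl}(F)$ iff $\{a_0,\dots,a_{n-2}\}\subseteq F$. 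The forward direction is the delicate point: choosing a minimal $F_0\subseteq F-\{a_0,\dots,a_{n-2}\}$ with $a_{n-1}\in\mathrm{cl}((F\cap\{a_0,\dots,a_{n-2}\})\cup F_0)$, if $F_0\neq\emptyset$ then the Exchange Principle (Definition~\ref{def_pregeo}) applied to some $b\in F_0$ places $b\in\mathrm{cl}(\{a_0,\dots,a_{n-2}\}\cup(F_0-\{b\}))$, contradicting the independence of $E$; and if $F_0=\emptyset$ then $a_{n-1}$ lies in the closure of a subset of $\{a_0,\dots,a_{n-2}\}$, which by the choice of $a_{n-1}$ must be all of it. Granting the lemma, $s(\vec{v})$ is dependent exactly when $x_{n-1}\in\vec{v}$ and $\{a_0,\dots,a_{n-2}\}\subseteq s(\vec{v})$, i.e. exactly when $\vec{x}\subseteq\vec{v}$; since no atom of $\Sigma$ has this property, $\I_{(M,\mathrm{cl})}\models_s\Sigma$, completing the countermodel and hence the proof.
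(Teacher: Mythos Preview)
Your proposal is correct and follows essentially the same strategy as the paper: map all variables but one from $\vec{x}$ injectively into an infinite independent set furnished by (P3), and send the remaining variable to the element provided by (P2) (or (P1) when $n=1$) sitting in the closure of the images of the rest of $\vec{x}$ but in no proper subclosure. The paper verifies $\I_{(M,\mathrm{cl})}\models_s\bota(\vec{v})$ by a direct element-by-element case analysis on $v\in\vec{v}$ using Exchange, whereas you package the same Exchange argument into the cleaner biconditional ``$s(\vec{v})$ is dependent iff $\vec{x}\subseteq\vec{v}$''; the content is the same.
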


	\begin{proof} Soundness is obvious.
Regarding completeness, suppose $\Sigma \nvdash \bota (\vec{x})$. Notice that, because of rule ($a_2.$), $\vec{x} \neq \emptyset$. Let $\vec{x} = (x_{0}, ..., x_{n-1})$. 
By assumption there exists $\I_{(M, \mathrm{cl})} \in \mathbf{C}$ such that $(M, \mathrm{cl})$ has the properties (P1), (P2) and (P3). Let then $e \in \mathrm{cl}(\emptyset)$, $B = \left\{ a_i \; | \; i < \omega \right\} \subseteq M$ an independent set and $D_0 = \left\{ a_i \; | \; i < n \right\}$. Let $\left\{ w_i \; | \; i < n \right\}$ be an injective enumeration of $\mathrm{Var} - \left\{ x_0 \right\}$ such that $w_i = x_{i+1}$ for $i < n$. Let $s$ be the following assignment:
	\begin{enumerate}[i)]
		\item $s(w_i) = a_i$ for every $v \in \mathrm{Var} - \left\{ x_0 \right\}$,
		\item $s(x_{0}) = d$,
\end{enumerate}
where $d$ witnesses property (P2) with respect to the set $D_0$, i.e. $d \in \mathrm{cl}(D_0)$ and $d \not\in \bigcup_{D \subsetneq D_0} \mathrm{cl}(D)$ (notice that if $n = 1$, then d $\in \mathrm{cl}(\emptyset)$). Obviously, $\I_{(M, \mathrm{cl})}\not\models_s \bota (\vec{x})$. Let now $\bota (\vec{v}) \in \Sigma$, we want to show that $\I_{(M, \mathrm{cl})} \models_s \bota (\vec{v})$.
	\smallskip

\noindent {\bf Case 1.} $\vec{x} - \vec{v} \,\neq \emptyset$. Let $v \in \vec{v}$, there are three possibilities.
\smallskip

\noindent	{\em Case A.} $v \not\in \vec{v} \cap \vec{x}$.
	Let $v = w_j$, then 
	\[ s(v) \not\in \mathrm{cl}(\left\{ s(w_i) \; | \; j \neq i < \omega \right\}) = \mathrm{cl}(\left\{ s(w_i) \; | \; j \neq i < \omega \right\} \cup \left\{ d \right\}). \] 
Thus, $s(v) \notin \mathrm{cl}(\left\{ s(z) \; | \; z \in \vec{v} \right\} - \left\{ s(v) \right\})$.
\smallskip

\noindent	{\em Case B.} $v = x_0$. Suppose that $d \in \mathrm{cl}(\left\{ s(z) \; | \; z \in \vec{v} \right\} - \left\{ d \right\})$.  Let $\vec{x}' = (x_1, ..., x_{n-1})$ and $\vec{v} - \vec{x} = (v_0, ..., v_{k-1})$. By assumption $d \not\in \mathrm{cl}(\left\{ s(v) \; | \; v \in \vec{v} \cap \vec{x}' \right\}$ (remember that $ \vec{x} - \vec{v} \,\neq \emptyset$), thus there has to exists $h < k$ so that
	\[ d \in \mathrm{cl}(\left\{ s(v_i) \; | \; i \leq h \right\} \cup \left\{ s(v) \; | \; v \in \vec{v} \cap \vec{x}' \right\}) - \mathrm{cl}(\left\{ s(v_i) \; | \; i < h \right\} \cup \left\{ s(v) \; | \; v \in \vec{v} \cap \vec{x}' \right\}).  \]
	But then by the Exchange Principle we have that
		\[ s(v_h) \in \mathrm{cl}(\left\{ s(v_i) \; | \; i < h \right\} \cup \left\{ s(v) \; | \; v \in \vec{v} \cap \vec{x}' \right\}) \cup \left\{ d \right\}),  \]
contradicting what we have observed in {\em Case A}.

\smallskip

\noindent	{\em Case C.} $v \in \vec{v} \cap \vec{x}$ and $v \neq x_0$. If $x_0 \not\in \vec{v}$, then there is nothing to show because we are as in {\em Case 1}. Suppose then that $x_0 \in \vec{v}$ and $s(v) \in \mathrm{cl}(\left\{ s(z) \; | \; z \in \vec{v} \right\} - \left\{ s(v) \right\})$. As noticed, $s(v) \not\in \mathrm{cl}(\left\{ s(z) \; | \; z \in \vec{v} - x_0 \right\} - \left\{ s(v) \right\})$, thus by the Exchange Principle we have that
	\[ d \in \mathrm{cl}(\left\{ s(z) \; | \; z \in \vec{v} \right\} - \left\{ d \right\}), \]
contradicting what we have observed in {\em Case B}.
\smallskip	
	
\noindent	{\bf Case 2.} $\vec{x} \subseteq \vec{v}$.
This case is not possible. Suppose indeed it is, then by rule ($c_2.$) we can assume that $\vec{v} = \vec{x} \vec{v}'$ with $\vec{v}' \subseteq \mathrm{Var} - \vec{x}$. Thus by rule ($b_2.$) we have that $\Sigma \vdash \bota (\vec{x})$, contrary to our assumption.
\smallskip	
	
\noindent This concludes the proof of the theorem.
	
\end{proof}

\index{Pregeometry Atomic!Independence Logic} {\em Pregeometry Atomic Independence Logic} is defined as follows.
The syntax and deductive system of this logic are the same as those of Atomic Independence Logic (see Section \ref{abs_sys}).

Given a pregeometry $(M, \mathrm{cl})$ we   obtain an independence structure  $$\S=\S_{(M, \mathrm{cl})}=(M,\bot_\S)$$ by defining for finite subsets $\bx$ and $\by$ of $M$: 
$$\bx\ \bot_\S\ \by\iff \bx \displayclindep[\emptyset] \by. $$  We call $\S$ a {\em  pregeometry independence structure}. 
Semantics is defined by  $$\S \models_s \vx\boto \vy\mbox{ iff } s(\vx)\ \bot_\S\ s(\vy).$$

%
%\subsubsection{Semantics}
%

	\begin{theorem}\label{compl_pregeom_atom_indep_logic} 
Suppose $\C$ is a class of  independence structures including a structure $\S_{(M, \mathrm{cl})}$ with $(M, \mathrm{cl})$ satisfying (P1)-(P3).
	 Then the Completeness Question for $\C$ has a positive answer i.e. if $\Sigma$ is a set  of  independence atoms and $\phi$ is an  independence atom, then $$\Sigma\vdash_{\mathcal{I}}\phi\iff\Sigma\models_{\C} \phi.$$
\end{theorem}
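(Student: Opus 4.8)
The plan is to deduce soundness directly from Lemma~\ref{prop_clindep} and to prove completeness by constructing a refuting assignment inside the pregeometry structure provided by (P1)--(P3). After unwinding the definition $\bx\ \bot_\S\ \by\iff\bx\clindep[\emptyset]\by$, each deductive rule matches a clause of Lemma~\ref{prop_clindep}: rule~($a_3.$) is immediate from the definition of $\clindep$, ($b_3.$) is Symmetry, ($c_3.$) is Monotonicity, ($d_3.$) is Exchange with $D=\emptyset$, and ($e_3.$) is Anti-Reflexivity; rules ($f_3.$) and ($g_3.$) are trivial because an assignment $s$ sends a sequence and any of its permutations or repetitions to the same finite subset of $M$. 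Hence $\Sigma\vdash_\mathcal{I}\phi\Rightarrow\Sigma\models_\C\phi$.

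For completeness I argue contrapositively. Assume $\Sigma\nvdash_\mathcal{I}\vx\boto\vy$ and let $\S=\S_{(M,\mathrm{cl})}\in\C$ be the structure whose pregeometry satisfies (P1)--(P3). In the spirit of the proofs of Theorems~\ref{compl_clo_op_dep_logic} and~\ref{compl_PGAAIndL} I build an assignment $s$ with $\S\models_s\Sigma$ but $\S\not\models_s\vx\boto\vy$. Property (P1) supplies an element of $\mathrm{cl}(\emptyset)$ to interpret the \emph{constant} variables, namely those $v$ with $v\boto v\in\la\Sigma\ra$; by rule~($e_3.$) such $v$ are independent of everything, so they may be sent to $\mathrm{cl}(\emptyset)$ without affecting any atom. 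Property (P3) supplies an infinite independent set of \emph{generators}, and (P2) supplies, for each finite independent $D_0$, a witness $d\in\mathrm{cl}(D_0)\setminus\bigcup_{D\subsetneq D_0}\mathrm{cl}(D)$ depending on all of $D_0$ but on no proper part. The non-constant variables are sent to such witnesses over carefully chosen finite sets of generators, engineered so that the only independence failures among the images are those forced by $\Sigma$, together with one failure straddling $\vx$ and $\vy$ that kills $\vx\boto\vy$.

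The crux, and the step I expect to be hardest, is to read off from the non-derivability of $\vx\boto\vy$ the exact configuration the model must realize. The naive idea of partitioning the variables into blocks and sending each block to a single generator (so that two sets are independent precisely when no block meets both) is genuinely insufficient: for instance $\Sigma=\{a\boto cd,\ b\boto cd,\ c\boto ab,\ d\boto ab\}$ does not derive $ab\boto cd$, yet every such block model that satisfies $\Sigma$ also satisfies $ab\boto cd$. A refuting model must introduce a \emph{shared} generator $g$ and send, say, $c$ and $d$ to witnesses over $\{g_a,g\}$ and $\{g_b,g\}$, manufacturing a ``diagonal'' dependency that lands in $\mathrm{cl}(s(c),s(d))\cap\mathrm{cl}(s(a),s(b))$ while each single variable stays independent of the opposite pair. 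Thus the assignment must realize, inside the abstract pregeometry, a whole generic matroid configuration whose incidences encode $\la\Sigma\ra$. The key technical lemma will be that (P2)-witnesses taken generically over an independent generator set behave like generic linear combinations, so that, via the Exchange Principle, $s(\bx)\clindep[\emptyset]s(\by)$ depends only on the generator supports of $\bx$ and $\by$; Lemma~\ref{lemma_indep_seq} is the base case in which all images are distinct generators.

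Finally, one could instead invoke the known completeness for team independence structures (\cite{geiger1991axioms,galliani_and_vaananen}) to obtain a refuting team over $\{0,1\}$ and attempt to transfer it; but realizing that team's configuration in the \emph{given} pregeometry still reduces to the same generic-witness construction, so it does not sidestep the central difficulty. I would therefore carry out the direct construction and organize its verification by a case analysis on the position, relative to each atom of $\Sigma$, of the generators supporting the straddling dependency --- mirroring Cases~1--2 of the proof of Theorem~\ref{compl_PGAAIndL} --- using rules ($b_3.$) and ($c_3.$) to rule out the configurations that would in fact make $\vx\boto\vy$ derivable.
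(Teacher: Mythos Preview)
Your soundness argument is correct and matches the paper. The gap is in completeness: you have missed the key simplification that makes the paper's construction work, namely the \emph{minimality reduction}. Before building any assignment, one replaces the non-derivable atom $\vec{x}\boto\vec{y}$ by a minimal one, i.e.\ one for which $\Sigma\vdash\vec{x}'\boto\vec{y}'$ whenever $\vec{x}'\subseteq\vec{x}$, $\vec{y}'\subseteq\vec{y}$ and at least one inclusion is proper (this is legitimate by Monotonicity: refuting the smaller atom refutes the larger). A short claim using rules ($b_3.$)--($e_3.$) then shows that no variable in the minimal $\vec{x}\vec{y}$ is constant. After this reduction a \emph{single} (P2)-witness suffices: send every variable outside $\vec{x}\vec{y}$ to some $e\in\mathrm{cl}(\emptyset)$, send the elements of $\vec{x}\vec{y}$ other than $x_0$ to distinct independent generators $a_0,\ldots,a_{k-1}$, and send $x_0$ to $d\in\mathrm{cl}(\{a_0,\ldots,a_{k-1}\})\setminus\bigcup_{D\subsetneq D_0}\mathrm{cl}(D)$. (There is a separate easy case when $\vec{x}\cap\vec{y}\neq\emptyset$.)

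Your own example shows the misconception. With $\Sigma=\{a\boto cd,\ b\boto cd,\ c\boto ab,\ d\boto ab\}$ the atom $ab\boto cd$ is already minimal, and the paper's assignment puts $s(b),s(c),s(d)$ at independent $a_0,a_1,a_2$ and $s(a)=d$. All four atoms of $\Sigma$ then hold: e.g.\ $s(c)=a_1\notin\mathrm{cl}(\{d,a_0\})$ since otherwise Exchange gives $d\in\mathrm{cl}(\{a_0,a_1\})$, contradicting (P2); hence $a_1\clindep[\emptyset]\{d,a_0\}$ and $c\boto ab$ holds. Meanwhile $\{d,a_0\}\not\clindep[\emptyset]\{a_1,a_2\}$ because $d\in\mathrm{cl}(\{a_0,a_1,a_2\})$. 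So your assertion that ``a refuting model must introduce a shared generator $g$'' is false, and the elaborate encode-the-whole-matroid programme is unnecessary. The verification that every $\vec{v}\boto\vec{w}\in\Sigma$ is satisfied is a short case analysis on whether $x_0\in\vec{v}\vec{w}$ and whether $\vec{x}\vec{y}\subseteq\vec{v}\vec{w}$; the minimality hypothesis is exactly what rules out the dangerous subcase $\vec{x}\vec{y}\subseteq\vec{v}\vec{w}$, via repeated applications of ($b_3.$)--($d_3.$). Without the minimality step your proposed construction is both unspecified and solving a harder problem than required.
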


\begin{proof} Soundness follows from Lemma \ref{prop_clindep}. Regarding completeness, let $\Sigma$ be a set of atoms and suppose that $\Sigma \nvdash \vec{x} \boto \vec{y}$. Notice that if this is the case then $\vec{x} \neq \emptyset$ and $\vec{y} \neq \emptyset$. Indeed if $\vec{y} = \emptyset$ then $\Sigma \vdash \vec{x} \boto \vec{y}$ because by rule ($a_3.$) $\vdash \vec{x} \boto \emptyset$. Analogously if $\vec{x} = \emptyset$ then $\Sigma \vdash \vec{x} \boto \vec{y}$ because by rule ($a_3.$) $\vdash \vec{y} \boto \emptyset$ and so by rule ($b_3.$) $\vdash \emptyset \boto \vec{y}$. 
	Furthermore we can assume that $\vec{x} \boto \vec{y}$ is minimal, in the sense that if $\vec{x}' \subseteq \vec{x}$, $\vec{y}' \subseteq \vec{y}$ and $\vec{x}' \neq \vec{x}$ or $\vec{y}' \neq \vec{y}$, then $\Sigma \vdash \vec{x}' \boto \vec{y}'$. This is for two reasons. 
	\begin{enumerate}[i)]
		\item If $\vec{x} \boto \vec{y}$ is not minimal we can always find a minimal atom $\vec{x}^* \boto \vec{y}^*$ such that $\Sigma \nvdash \vec{x}^* \boto \vec{y}^*$, $\vec{x}^* \subseteq \vec{x}$ and $\vec{y}^* \subseteq \vec{y}$ --- just keep deleting elements of $\vec{x}$ and $\vec{y}$ until you obtain the desired property or until both $\vec{x}^*$ and $\vec{y}^*$ are singletons, in which case, due to the trivial independence rule ($a_3.$), $\vec{x}^* \boto \vec{y}^*$ is a minimal statement. 
		\item For any $\vec{x}' \subseteq \vec{x}$, $\vec{y}' \subseteq \vec{y}$ and assignment $s$ we have that if $\S\not\models_s \vec{x}' \boto \vec{y}'$ then $\S\not\models_s \vec{x} \boto \vec{y}$ --- this follows easily by Monotonicity.

\end{enumerate}

\smallskip 

\noindent	Let $V = \left\{ v \in \mathrm{Var} \; | \; \Sigma \vdash v \boto v \right\}$, $W = \mathrm{Var} - V$, $\vec{x} \cap W = \vec{x}'$ and $\vec{y} \cap W = \vec{y}'$.
\smallskip

\noindent \begin{claim}\label{claim_for_proof} If $\Sigma \vdash \vec{x}' \boto \vec{y}'$, then $\Sigma \vdash \vec{x} \boto \vec{y}$.

\end{claim}

	\begin{claimproof} By induction on $t = |\vec{x} - \vec{x}'|$, we show that if $\Sigma \vdash \vec{x}' \boto \vec{y}'$, then $\Sigma \vdash \vec{x} \boto \vec{y}'$. By rule ($b_3.$) this suffices. Let $\vec{x} - \vec{x}'= (x_{k_0}, ..., x_{k_{t-1}})$ and $s \leq t$, then by induction assumption and  rules ($e_3.$), ($b_3.$) and ($d_3.$) we have that

	\[ \begin{array}{rcl}
	  \Sigma \vdash \vec{y}' \boto \vec{x}'  x_{k_0} \cdots x_{k_{s-2}} & \text{ and } & \Sigma \vdash \vec{y}'  \vec{x}'  x_{k_0} \cdots x_{k_{s-2}} \boto x_{k_{s-1}}      \\
													  	&  \Downarrow & \\
	&	\Sigma \vdash \vec{y}' \boto \vec{x}'   x_{k_0} \cdots x_{k_{s-1}}      &  
\end{array}
	 \]
and hence by rule ($f_3.$) and ($b_3.$) we have that $\Sigma \vdash \vec{x} \boto \vec{y}'$.

\end{claimproof}

\smallskip 

\noindent The claim above shows that if $\vec{x} \boto \vec{y}$ is minimal, then for every $z \in \vec{x}\vec{y}$ we have that $\Sigma \nvdash z \boto z$. Furthermore, because of rule ($g_3.$) we can assume that $\vec{x}$ and $\vec{y}$ are injective. This will be relevant in the following. We now make a case distinction. 
\smallskip 

\noindent \textbf{Case 1.} There exists $z \in \vec{x} \cap \vec{y}$. Remember that by assumption there exists $\S_{(M, \mathrm{cl})} \in \mathbf{C}$ such that $(M, \mathrm{cl})$ has the properties (P1), (P2) and (P3). Let $s$ be the following assignment:	
\begin{enumerate}[i)]
		\item $s(v) = e$ for every $v \in \mathrm{Var} - z$,
		\item $s(z) = d$.
\end{enumerate}
where $e \in \mathrm{cl}(\emptyset)$ and $d \not\in \mathrm{cl}(\emptyset)$, i.e. $d \not\!\clindep[\emptyset] d$. 

\smallskip

\noindent Obviously $\S \not\models_s \vec{x} \boto \vec{y}$, in fact $\S \not\models_s z \boto z$. Furthermore, for every $\vec{v} \boto \vec{w} \in \Sigma$, we have that $\S\models_s \vec{v} \boto \vec{w}$. Let indeed $\vec{v} \boto \vec{w} \in \Sigma$, then $z \notin \vec{v} \cap \vec{w}$, because otherwise, by rule ($c_3.$), we would have that $\Sigma \vdash z \boto z$, contrary to the minimality of $\vec{x} \boto \vec{y}$. Hence $\S\models_s \vec{v} \boto \vec{w}$, because by the choice of $e$ we have that $e \clindep[\emptyset] ed$.

\smallskip

\noindent	\textbf{Case 2.} $\vec{x} \cap \vec{y} = \emptyset$.

\noindent Let $\vec{x} = (x_0, ..., x_{n-1})$, $\vec{y} = (y_0, ..., y_{m-1})$ and $k = (n-1) + m$. Let $(w_i \; | \; i < k)$ be an injective enumeration of $\vec{x}  \vec{y} - x_0$ with $w_i = x_{i+1}$ for $i \leq n-2$ and $w_{i+{(n-1)}} = y_{i}$ for $i \leq m-1$. Also in this case, remember that by assumption there exists  $\S_{(M, \mathrm{cl})} \in \mathbf{C}$ such that $(M, \mathrm{cl})$ has the  properties (P1), (P2) and (P3). Let then $e \in \mathrm{cl}(\emptyset)$ and $D_0 = \left\{ a_i \; | \; i < k \right\} \subseteq M$ an independent set.
	Let $s$ be the following assignment:
	\begin{enumerate}[i)]
		\item $s(v) = e$ for every $v \in \mathrm{Var} - \vec{x}  \vec{y}$,
		\item $s(w_i) = a_i $ for every $i < k$,
		\item $s(x_{0}) = d$,
\end{enumerate}
where $d$ witnesses property (P2) with respect to the set $D_0$, i.e. $d \in \mathrm{cl}(D_0)$ and $d \not\in \bigcup_{D \subsetneq D_0} \mathrm{cl}(D)$.

\smallskip

\noindent	We claim that $\S\not\models_s \vec{x} \boto \vec{y}$. By the choice of $d$, we have that $d \not\!\clindep[\emptyset] a_0 \cdots a_{k-1}$. Suppose that $d a_0 \cdots a_{n-2} \clindep[\emptyset] a_{n-1} \cdots a_{k-1}$. Again by the choice of $d$, we have $d \clindep[\emptyset] a_0 \cdots a_{n-2}$, so by Exchange we have $d \clindep[\emptyset] a_0 \cdots a_{k-1}$, a contradiction. Thus, $d a_0 \cdots a_{n-2} \not\!\clindep[\emptyset] a_{n-1} \cdots a_{k-1}$ and hence $s(\vec{x}) \not\!\clindep[\emptyset] s(\vec{y})$.
\smallskip

\noindent	Let now $\vec{v} \boto \vec{w} \in \Sigma$, we want to show that $\S\models_s \vec{v} \boto \vec{w}$. 
%As before, we assume, without loss of generality, that $\vec{v}$ and $\vec{w}$ are injective and $\vec{v}, \vec{w} \neq \emptyset$. 
Let $\vec{v} \cap \vec{x}  \vec{y} = \vec{v}'$ and $\vec{w} \cap \vec{x}  \vec{y} = \vec{w}'$.
	Notice that 
	\[ s(\vec{v}) \displayclindep[\emptyset] s(\vec{w}) \text{ if and only if } s(\vec{v}') \displayclindep[\emptyset] s(\vec{w}'). \]
Left to right holds in general. As for the other direction, suppose that $s(\vec{v}') \clindep[\emptyset] s(\vec{w}')$. If $u \in \vec{v}  \vec{w} - \vec{v}'  \vec{w}'$, then $s(u) = e$. Thus

	 \[ \begin{array}{rcl}
	 &\;\;\; s(\vec{v}') \clindep[\emptyset] s(\vec{w}')  \text{ and } s(\vec{v}')  s(\vec{w}') \clindep[\emptyset] e         & [\text{By Anti-Reflexivity}]  \\
	        											  &  \Downarrow & \\
	 &\;\;\;\;\, s(\vec{v}')  \clindep[\emptyset] s(\vec{w}')  e          &  \\
	  													  &  \Downarrow & \\
	  													  &  \;\,\,s(\vec{v}')  \clindep[\emptyset] s(\vec{w}). &
\end{array}
	 \]
So
	\[ \begin{array}{rcl}
 	&\;\;\; s(\vec{w}) \clindep[\emptyset] s(\vec{v}')  \text{ and } s(\vec{w})  s(\vec{v}') \clindep[\emptyset] e          & 			   [\text{By Anti-Reflexivity}]  \\
        											  &  \Downarrow & \\
 	&\;\;\;\; s(\vec{w})  \clindep[\emptyset] s(\vec{v}')  e          &  \\
  													  &  \Downarrow & \\
  													  &  \;\,s(\vec{w})  \clindep[\emptyset] s(\vec{v}). &
\end{array}
 \]

\smallskip

\noindent Notice that $\vec{v}' \cap \vec{w}' = \emptyset$. Indeed, suppose that there exists $z \in \vec{v}' \cap \vec{w}'$, then, by rule ($c_3.$), we have that $\Sigma \vdash z \boto z$, contrary to the minimality of $\vec{x} \boto \vec{y}$. We make another case distinction.
\smallskip

\noindent	\textbf{Subcase 1.} $x_{0} \notin \vec{v}' \vec{w}'$. 
	 As noticed, $\vec{v}' \cap \vec{w}' = \emptyset$, and so, by properties of our assignment $s(\vec{v}') \cap s(\vec{w}') = \emptyset$. Thus, by Lemma~\ref{lemma_indep_seq}, it follows that $s(\vec{v}') \clindep[\emptyset] s(\vec{w}')$.
\smallskip

\noindent	\textbf{Subcase 2.} $x_{0} \in \vec{v}' \vec{w}'$. There are two subcases.
\smallskip

\noindent	\textbf{Subcase 2.1.} $(\vec{x}  \vec{y}) - (\vec{v}'  \vec{w}') \neq \emptyset$. 
		Let $\vec{a} = s(\vec{v}') - d $ and $\vec{b} = s(\vec{w}') - d$. By assumption we have that $(\vec{x} - x_0) \cup \vec{y} \not\subseteq \vec{v}'  \vec{w}'$ and so $s((\vec{x} - x_0) \cup \vec{y}) \not\subseteq \vec{a}  \vec{b}$. Thus, by the choice of $d$, we have that $\vec{a} \vec{b} \clindep[\emptyset] d$. Suppose now that $x_{0} \in \vec{w}'$, the other case is symmetrical. By properties of our assignment $\vec{a} \cap \vec{b} = \emptyset$, hence by Lemma~\ref{lemma_indep_seq}, we have that $\vec{a} \clindep[\emptyset] \vec{b}$. Thus, by Exchange, $\vec{a} \clindep[\emptyset] \vec{b} d$. Hence, permuting the elements in $\vec{b} d$, we conclude that $s(\vec{v}) \clindep[\emptyset] s(\vec{w})$.
\smallskip

\noindent	\textbf{Subcase 2.2.} $\vec{x}  \vec{y} \subseteq \vec{v}'  \vec{w}'$. 
		This case is not possible. By rule ($f_3.$) and ($c_3.$) we can assume that $\vec{v} = \vec{v}'  \vec{u}$ and $\vec{w} = \vec{w}'  \vec{u}'$ with $\vec{u}  \vec{u}' \subseteq \mathrm{Var} - \vec{v}'  \vec{w}'$. Furthermore because $\vec{x}  \vec{y} \subseteq \vec{v}'  \vec{w}'$ again by rule ($f_3.$) we can assume that $\vec{v}' = \vec{x}'  \vec{y}'  \vec{z}'$ and $\vec{w}' = \vec{x}''  \vec{y}''  \vec{z}''$ with $\vec{x}'  \vec{x}'' = \vec{x}$, $\vec{y}'  \vec{y}'' = \vec{y}$ and $\vec{z}' \vec{z}'' \subseteq \mathrm{Var} - \vec{x}  \vec{y}$. Hence $\vec{v} = \vec{x}'  \vec{y}'  \vec{z}'  \vec{u}$ and $\vec{w} = \vec{x}''  \vec{y}''  \vec{z}''  \vec{u}'$.
		By hypothesis we have that $\vec{v} \boto \vec{w} \in \Sigma$ so by rules ($c_3.$) and ($b_3.$) we can conclude that $\Sigma \vdash \vec{x}'  \vec{y}' \boto \vec{x}''  \vec{y}''$.
		If $\vec{x}' = \vec{x}$ and $\vec{y}'' = \vec{y}$, then $\Sigma \vdash \vec{x} \boto \vec{y}$ because as we noticed $\vec{v}' \cap \vec{w}' = \emptyset$, a contradiction. Analogously if $\vec{x}'' = \vec{x}$ and $\vec{y}' = \vec{y}$, then $\Sigma \vdash \vec{y} \boto \vec{x}$. Thus by rule ($b_3.$) $\Sigma \vdash \vec{x} \boto \vec{y}$, a contradiction. There are then four cases:
		\begin{enumerate}[i)]
			\item $\vec{x}' \neq \vec{x}$ and $\vec{x}'' \neq \vec{x}$;
			\item $\vec{y}' \neq \vec{y}$ and $\vec{x}'' \neq \vec{x}$;
			\item $\vec{y}' \neq \vec{y}$ and $\vec{y}'' \neq \vec{y}$;
			\item $\vec{x}' \neq \vec{x}$ and $\vec{y}'' \neq \vec{y}$.

	\end{enumerate}
		Suppose that either i) or ii) holds. If this is the case, then $\Sigma \vdash \vec{x}' \boto \vec{y}'$ because by hypothesis $\vec{x} \boto \vec{y}$ is minimal. So $\Sigma \vdash \vec{x}' \boto \vec{y}'  \vec{x}''  \vec{y}''$, because by rule ($d_3.$) 
		\[ \Sigma \vdash \vec{x}' \boto \vec{y}' \text{ and } \Sigma \vdash \vec{x}'  \vec{y}' \boto \vec{x}''  \vec{y}''\ \Rightarrow \Sigma \vdash \vec{x}' \boto \vec{y}'  \vec{x}''  \vec{y}''.\]
		Hence by rule ($e_3.$) $\Sigma \vdash \vec{x}' \boto \vec{x}''  \vec{y}$ and then by rule ($b_3.$)  $\Sigma \vdash \vec{x}''  \vec{y} \boto \vec{x}'$. So by rule ($e_3.$) $\Sigma \vdash \vec{y}  \vec{x}'' \boto \vec{x}'$.
		We are under the assumption that $\vec{x}'' \neq \vec{x}$ thus again by minimality of $\vec{x} \boto \vec{y}$ we have that $\Sigma \vdash \vec{x}'' \boto \vec{y}$ and so by rule ($b_3.$) we conclude that $\Sigma \vdash \vec{y} \boto \vec{x}''$. Hence $\Sigma \vdash \vec{y} \boto \vec{x}''  \vec{x}'$, because by rule ($d_3.$) 
		\[ \Sigma \vdash \vec{y} \boto \vec{x}'' \text{ and } \Sigma \vdash \vec{y}  \vec{x}'' \boto \vec{x}' \Rightarrow \Sigma \vdash \vec{y} \boto \vec{x}''  \vec{x}'.\]
		Then finally by rules ($e_3.$) and ($b_3.$) we can conclude that $\Sigma \vdash \vec{x} \boto \vec{y}$, a contradiction.
		The case in which either iii) or iv) holds is symmetrical.
\smallskip

\noindent This concludes the proof of the theorem.

\end{proof}

%
%\subsection{Pregeometry Atomic Absolute Conditional Independence Logic}

	Notice that the notions of independence introduced in Section \ref{pregeometries} are conditional, i.e. they talk about independence {\em over} a set of parameters. It is then possible to formulate a conditional version of the system Pregeometry Atomic Independence Logic based on the syntax and deductive system of Atomic Conditional Independence Logic (see Section \ref{abs_sys}). From Lemma \ref{prop_clindep} it follows that this system is sound. We do not know whether the given axioms are complete, though. This is an open problem at the moment. By soft arguments (reduction to first-order logic by means of extra predicates) one can argue that there is {\em some} effective axiomatization, we just do not have an explicit one.
	
\section{Dependence and Independence in $\omega$-Stable Theories}\label{daiist}

	In this section we generalize our work to cover the case of independence at play in $\omega$-stable theories. Much stronger results are possible, see \cite{pao}.

\subsection{Forking and Strongly Minimal Sets}
	
	Let $\mathcal{M}$ be a first-order structure and $A \subseteq M$, we denote by $\typesp{M}{n}{A}$ the space of complete $n$-types over $A$.
	
	\begin{definition} Let $T$ be a complete first-order theory in a countable language with infinite models and let $\kappa$ be an infinite cardinal. We say that $T$ is \emph{$\kappa$-stable} if whenever $\mathcal{M} \models T$, $A \subseteq M$ and $|A| = \kappa$, then $|\typesp{M}{n}{A}| = \kappa$.

\end{definition} \index{theory!$\kappa$-stable}
\noindent	Let $T$ be an $\omega$-stable theory, $\mathcal{M} \models T$, $A \subseteq M$ and $p \in \typesp{M}{n}{A}$, we denote by $\mathrm{RM}(p)$ the {\em Morley rank} of $p$. Furthermore, we denote by $\mathfrak{M}$ the monster model of $T$. For details see \cite[Chapter 6]{marker}.

\begin{definition} Let $\mathcal{M} \models T$, $A \subseteq B \subseteq M$, $p \in \typesp{M}{n}{A}$, $q \in \typesp{M}{n}{B}$ and $p \subseteq q$. If $\mathrm{RM}(q) < \mathrm{RM}(p)$, we say that $q$ is a \emph{forking extension} of $p$ and that $q$ forks over $A$. If $\mathrm{RM}(q) = \mathrm{RM}(p)$, we say that $q$ is a \emph{non-forking extension} of $p$.
	
\end{definition} \index{forking}

\begin{definition} Let $\mathfrak{M}$ be the monster model of $T$ and $A, B, C \subseteq \mathfrak{M}$. We say that $A$ is \emph{independent} from $B$ over $C$ if for every $\vec{a} \in A^{< \omega}$ we have that $\mathrm{tp}(\vec{a}/C \cup B)$ is a non-forking extension of $\mathrm{tp}(\vec{a}/C)$. In this case we write $A \forkindep[C] B$.
	
\end{definition}

	This notion of independence has all the properties that we observed in the case of pregeometric independence in Section \ref{pregeometries}, as we see in the next well-known lemma (see e.g. \cite[Lemma~6.3.16-21]{marker}).

\begin{lemma}\label{prop_frkindep} Let $\mathfrak{M}$ be the monster model of $T$ and $A, B, C, D \subseteq \mathfrak{M}$. Then
\begin{enumerate}[i)]
\item $A \forkindep[A] B$ \; [Existence];
\item if $A \forkindep[C] B$ and $D \subseteq B$, then $A \forkindep[C] D$ \; [Monotonicity];
\item $A \forkindep[C] B \cup D$ if and only if $A \forkindep[C] B$ and $A \forkindep[C \cup B] D$ \; [Transitivity];
\item $A \forkindep[C] B$ if and only if $A \forkindep[C] B_0$ for all finite $B_0 \subseteq B$ \; [Finite Character];
\item if $A \forkindep[C] B$, then $B \forkindep[C] A$ \; [Symmetry];
\item if $A \forkindep[D] B$ and $A \cup B \forkindep[D] C$, then $A \forkindep[D] B \cup C$ \; [Exchange];
\item if $A \forkindep[B] A$, then $A \forkindep[B] E$ for any $E \subseteq M$.

\end{enumerate}

\end{lemma}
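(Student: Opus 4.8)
The plan is to reduce all seven properties to a handful of elementary facts about Morley rank in the $\omega$-stable theory $T$, isolating the one genuinely deep ingredient (Symmetry) which I would quote from \cite[Chapter 6]{marker}. Recall that $A \forkindep[C] B$ unfolds, by definition, into the assertion that for every finite tuple $\vec{a}$ from $A$ one has $\mathrm{RM}(\mathrm{tp}(\vec{a}/C \cup B)) = \mathrm{RM}(\mathrm{tp}(\vec{a}/C))$; since Morley rank is non-increasing as the parameter set grows, this equality is exactly the statement that the extension does not drop the rank. The background facts I would use throughout are: (a) monotonicity, $C \subseteq C'$ implies $\mathrm{RM}(\mathrm{tp}(\vec{a}/C')) \le \mathrm{RM}(\mathrm{tp}(\vec{a}/C))$; (b) $\mathrm{RM}(\mathrm{tp}(\vec{a}/C)) = 0$ exactly when $\vec{a} \in \mathrm{acl}(C)$, in particular whenever $\vec{a}$ is a tuple from $C$; and (c) the finite character of forking, namely that a rank drop is witnessed by a single formula, which mentions only finitely many parameters.

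With these in hand, five of the properties are pure rank bookkeeping. For Existence, a tuple $\vec{a}$ from $A$ lies in $A \subseteq A \cup B$, so both ranks in the definition equal $0$ by (b). For Monotonicity, $D \subseteq B$ gives $C \subseteq C\cup D \subseteq C\cup B$, and squeezing the middle rank between the two equal outer ranks (using (a) together with the hypothesis) forces $\mathrm{RM}(\mathrm{tp}(\vec{a}/C\cup D)) = \mathrm{RM}(\mathrm{tp}(\vec{a}/C))$. Transitivity is the telescoping of the chain $C \subseteq C\cup B \subseteq C\cup B\cup D$: by (a) the three ranks are weakly decreasing, so the two outer ones agree (which is $A \forkindep[C] B\cup D$) if and only if each consecutive pair agrees (which are $A \forkindep[C] B$ and $A \forkindep[C\cup B] D$), and this holds pointwise in $\vec{a}$. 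Finite Character is Monotonicity from left to right, and from right to left it is (c): a drop in rank over $C$ after adjoining $B$ is witnessed by one formula with finitely many parameters $B_0 \subseteq B$, so the drop already occurs over $C \cup B_0$, contradicting $A \forkindep[C] B_0$. Finally Anti-Reflexivity: the hypothesis $A \forkindep[B] A$ says $\mathrm{RM}(\mathrm{tp}(\vec{a}/B\cup A)) = \mathrm{RM}(\mathrm{tp}(\vec{a}/B))$ for every tuple $\vec{a}$ from $A$, but the left side is $0$ by (b), whence $\vec{a} \in \mathrm{acl}(B)$; then by (a) $\mathrm{RM}(\mathrm{tp}(\vec{a}/B\cup E)) = 0 = \mathrm{RM}(\mathrm{tp}(\vec{a}/B))$ for any $E$, giving $A \forkindep[B] E$.

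The substantive input is Symmetry, $A \forkindep[C] B \Rightarrow B \forkindep[C] A$. This is not a formal consequence of the monotone-rank inequalities above; it is the fundamental symmetry of non-forking in stable theories, which I would cite from \cite[Chapter 6]{marker} (for $\omega$-stable $T$ it is proved via Morley sequences and the behaviour of rank on indiscernibles). Granting Symmetry, Exchange falls out formally from Transitivity. To obtain $A \forkindep[D] B\cup C$ it suffices, by Transitivity, to have $A \forkindep[D] B$ (a hypothesis) and $A \forkindep[D\cup B] C$. For the latter, start from the hypothesis $A\cup B \forkindep[D] C$, apply Symmetry to get $C \forkindep[D] A\cup B$, split this by Transitivity into $C \forkindep[D] B$ and $C \forkindep[D\cup B] A$, and apply Symmetry once more to the second conjunct to conclude $A \forkindep[D\cup B] C$. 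Thus the only real obstacle is Symmetry itself; once it is imported, the remaining six properties are exactly the monotonicity and finite-character manipulations sketched above.
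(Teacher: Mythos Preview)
Your sketch is correct. The rank-bookkeeping arguments for Existence, Monotonicity, Transitivity, Finite Character, and Anti-Reflexivity are all sound, and your derivation of Exchange from Symmetry plus Transitivity is the standard manipulation and checks out line by line. Citing \cite[Chapter~6]{marker} for Symmetry is appropriate, since that is indeed the one step that requires genuine stability-theoretic input rather than formal rank inequalities.

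As for comparison with the paper: there is nothing to compare. The paper does not prove this lemma at all; it simply introduces it as ``well-known'' and refers the reader to \cite[Lemma~6.3.16--21]{marker}. Your proposal therefore goes considerably further than the paper does, supplying an explicit argument where the paper gives only a citation. In that sense your approach is not different from the paper's so much as it is a fleshed-out version of what the cited reference contains.
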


We conclude this section stating a fundamental theorem about Morley rank in strongly minimal sets, which will play a crucial role in the following. We first give some definitions.
	
\begin{definition} Let $\mathcal{M}$ be an $L$-structure and let $D \subseteq M^n$ be an infinite definable set. We say that $D$ is \emph{minimal} in $\mathcal{M}$ if for every definable $Y \subseteq D$ either $Y$ is finite or $D - Y$ is finite. If $\phi(\vec{v}, \vec{a})$ is the formula that defines $D$, then we also say that $\phi(\vec{v}, \vec{a})$ is minimal. 
	We say that $D$ and $\phi$ are \emph{strongly minimal} if $\phi$ is minimal in any elementary extension $\mathcal{N}$ of $\mathcal{M}$. 
	We say that a theory $T$ is strongly minimal if the formula $v = v$ is strongly minimal (i.e. if $\mathcal{M} \models T$, then $M$ is strongly minimal).
	
\end{definition}

\noindent	If $\mathcal{M}$ is a model of an $\omega$-stable theory and $F = \phi(\mathcal{M}, \vec{c})$ is a strongly minimal set we can define a pregeometry $(F, \mathrm{cl}_{\vec{c}})$ by defining $\mathrm{cl}_{\vec{c}}(X) = \mathrm{acl}(\vec{c} \cup X) \cap F$. When $F = \phi(\mathcal{M}, \emptyset)$, we denote the pregeometry $(F, \mathrm{cl}_{\emptyset})$ simply as $(F, \mathrm{cl})$. We are now in the position to state the announced theorem, for a proof see \cite[Theorem~6.2.19]{marker} or \cite[Section 1.5]{pillay}.

	\begin{theorem}\label{rank_eq_dim} Let $T$ be an $\omega$-stable theory, $F = \phi(\mathcal{M}, \vec{c})$ a strongly minimal set, $\vec{a} \in F$ and $A \subseteq F$. Then $\mathrm{RM}(\mathrm{tp}(\vec{a}/A \cup \vec{c})) = \mathrm{dim}(\vec{a}/A)$, where $\mathrm{dim}(\vec{a}/A)$ is  computed in the pregeometry $(F, \mathrm{cl}_{\vec{c}})$.
		
\end{theorem}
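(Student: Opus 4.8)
The plan is to prove the identity $\mathrm{RM}(\mathrm{tp}(\vec{a}/A \cup \vec{c})) = \mathrm{dim}(\vec{a}/A)$ by induction on the length $n$ of the tuple $\vec{a} = (a_1, \ldots, a_n)$, exploiting that \emph{both} sides are additive when one strips off a single coordinate. Throughout I write $B = A \cup \vec{c}$ and work inside the monster model $\mathfrak{M}$. Since $F$ is strongly minimal we have $\mathrm{RM}(\phi(\vec{v}, \vec{c})) = 1$, so every type of a single element of $F$ has Morley rank $0$ or $1$, and consequently every $n$-tuple from $F$ has Morley rank at most $n$; in particular all ranks occurring below are finite natural numbers, a fact the argument leans on crucially.

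First I would establish the case $n = 1$, and in fact prove it over an arbitrary parameter set containing $\vec{c}$, since that is what the induction will feed on. For $a \in F$ there are exactly two possibilities. If $a \in \mathrm{acl}(B)$, then $\mathrm{tp}(a/B)$ is algebraic, so $\mathrm{RM}(\mathrm{tp}(a/B)) = 0$; and since $a \in \mathrm{acl}(B) \cap F = \mathrm{cl}_{\vec{c}}(A)$ we have $\mathrm{dim}(a/A) = 0$. If instead $a \notin \mathrm{acl}(B)$, then $\mathrm{tp}(a/B)$ is the unique non-algebraic type concentrating on the strongly minimal set $F$, hence the generic type, which has Morley rank $1$; and correspondingly $a \notin \mathrm{cl}_{\vec{c}}(A)$ gives $\mathrm{dim}(a/A) = 1$. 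Thus the two quantities agree when $n = 1$.

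For the inductive step I would split $\vec{a} = \vec{a}' a_n$ with $\vec{a}' = (a_1, \ldots, a_{n-1})$ and apply additivity on both sides. On the rank side, the Lascar inequalities
\[ \mathrm{RM}(a_n/B\vec{a}') + \mathrm{RM}(\vec{a}'/B) \;\leq\; \mathrm{RM}(\vec{a}/B) \;\leq\; \mathrm{RM}(a_n/B\vec{a}') \oplus \mathrm{RM}(\vec{a}'/B) \]
collapse to an equality $\mathrm{RM}(\vec{a}/B) = \mathrm{RM}(a_n/B\vec{a}') + \mathrm{RM}(\vec{a}'/B)$, because all three ranks are finite natural numbers and for naturals the Cantor natural sum $\oplus$ agrees with ordinary addition. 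On the dimension side, the corresponding pregeometry identity $\mathrm{dim}(\vec{a}/A) = \mathrm{dim}(a_n/A \cup \vec{a}') + \mathrm{dim}(\vec{a}'/A)$ follows from the basis-extension property of $(F, \mathrm{cl}_{\vec{c}})$, extending a basis of $\vec{a}'$ over $A$ to one of $\vec{a}$ over $A$. Applying the base case to $a_n$ over $B \cup \vec{a}'$ and the induction hypothesis to $\vec{a}'$ over $B$ then matches the two summands term by term, and the result follows.

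The step I expect to be the main obstacle is justifying the \emph{exact} additivity of Morley rank: the Lascar inequalities together with the observation that in a strongly minimal set all relevant ranks are finite, so that the a priori weaker squeeze involving $\oplus$ becomes a genuine equation. Everything else is routine, including the bookkeeping of localizations: one should check that $\mathrm{dim}(a_n/A \cup \vec{a}')$ computed in $(F, \mathrm{cl}_{\vec{c}})$ is exactly the notion to which the base case applies, which holds because the localization of $\mathrm{cl}_{\vec{c}}$ at $A \cup \vec{a}'$ is $\mathrm{acl}(\vec{c} \cup A \cup \vec{a}') \cap F$ and $a_1, \ldots, a_{n-1} \in F \subseteq M$.
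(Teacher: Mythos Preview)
Your proof is correct and is essentially the standard textbook argument. However, note that the paper does \emph{not} give its own proof of this theorem: it is stated as background and the reader is referred to \cite[Theorem~6.2.19]{marker} and \cite[Section~1.5]{pillay}. Your induction on the length of $\vec{a}$, using the Lascar inequalities together with finiteness of all ranks in a strongly minimal set to force exact additivity, is precisely the argument one finds in those references, so there is nothing to compare against within the paper itself.

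One small remark: your opening claim that every $n$-tuple from $F$ has rank at most $n$ is what ultimately makes the Lascar squeeze collapse, but the cleanest way to justify it is itself via the upper Lascar inequality applied inductively (or by the elementary fact that $\mathrm{RM}\big(\bigwedge_{i\leq n}\phi(v_i,\vec{c})\big)=n$). This is not circular, since the upper bound $\mathrm{RM}(\vec{a}/B)\leq \mathrm{RM}(a_n/B\vec{a}')\oplus \mathrm{RM}(\vec{a}'/B)$ holds unconditionally; you only need finiteness to turn $\oplus$ into $+$ and match the lower bound. You have already flagged this as the main point to check, and it goes through.
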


\subsection{$\omega$-Stable Atomic Independence Logic}

\index{$\omega$-Stable Atomic!Independence Logic} We define {\em $\omega$-Stable Atomic Independence Logic} as follows. 
The syntax and deductive system of this logic are the same as those of Atomic Independence Logic (see Section \ref{abs_sys}).
%
%\subsubsection{Semantics}asdasd
%
	Let $T$ be a first-order $\omega$-stable theory. For every $\mathcal{M} \models T$ we obtain an independence structure $$\S=\S_{(\mathcal{M}, \mbox{\scriptsize $\forkindep$}\!\hspace{-0.5pt}\hspace{-0.5pt})}=(M,\bot_\S)$$ by defining for finite subsets $\bx$ and $\by$ of $M$: 
$$\bx\ \bot_\S\ \by\iff \bx \displayforkindep[\emptyset] \by. $$  We call $\S$ a {\em  forking independence structure}. 
Semantics is defined by  $$\S \models_s \vx\boto \vy\mbox{ iff } s(\vx)\ \bot_\S\ s(\vy).$$

%
%\subsubsection{Semantics}
%

	\begin{theorem}\label{saind} 
Suppose $\C$ is a class of   independence structures including a forking independence structure $\S_{(\mathcal{M}, \mbox{\scriptsize $\forkindep$}\!\hspace{-0.5pt}\hspace{-0.5pt})}$ such that there exists a strongly minimal set $F = \phi(\mathcal{M}, \emptyset)$ such that $(F, \mathrm{cl})$ has properties (P1), (P2) and (P3).
	 Then the Completeness Question for $\C$ has a positive answer i.e. if $\Sigma$ is a set  of  independence atoms and $\phi$ is an  independence atom, then $$\Sigma\vdash_{\mathcal{I}}\phi\iff\Sigma\models_{\C} \phi.$$
\end{theorem}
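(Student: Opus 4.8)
The plan is to reduce Theorem~\ref{saind} to the already-established pregeometry case, Theorem~\ref{compl_pregeom_atom_indep_logic}, by identifying forking independence on the strongly minimal set $F$ with pregeometry independence in $(F,\mathrm{cl})$. For soundness, note that the forking independence relation $\forkindep$ satisfies Existence, Monotonicity, Transitivity, Symmetry, Exchange and Anti-Reflexivity by Lemma~\ref{prop_frkindep} --- exactly the properties used in the soundness direction of Theorem~\ref{compl_pregeom_atom_indep_logic}. Hence $\Sigma\vdash_{\mathcal I}\phi\Rightarrow\Sigma\models_\C\phi$ follows by checking that each rule ($a_3.$)--($g_3.$) preserves satisfaction under the semantics $\bot_\S=\forkindep[\emptyset]$, which is routine.

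The heart of the argument is a Key Lemma: for all finite $\bx,\by\subseteq F$,
$$\bx\forkindep[\emptyset]\by\iff\bx\clindep[\emptyset]\by,$$
where the right-hand side is computed in the pregeometry $(F,\mathrm{cl})$. Unwinding the definitions, $\bx\forkindep[\emptyset]\by$ says that for every tuple $\vec a$ enumerating a subset of $\bx$ we have $\mathrm{RM}(\mathrm{tp}(\vec a/\by))=\mathrm{RM}(\mathrm{tp}(\vec a/\emptyset))$, while $\bx\clindep[\emptyset]\by$ (Definition~\ref{3.14}) says $\mathrm{dim}(\vec a/\by)=\mathrm{dim}(\vec a/\emptyset)$ for every such $\vec a$. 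Since $F=\phi(\mathcal M,\emptyset)$ is strongly minimal, Theorem~\ref{rank_eq_dim} applied with $\vec c=\emptyset$ gives $\mathrm{RM}(\mathrm{tp}(\vec a/A))=\mathrm{dim}(\vec a/A)$ for every $\vec a\in F$ and $A\subseteq F$, so the two rank equalities are literally the same equation. Thus the two relations coincide on finite subsets of $F$, and the substructure of $\S$ supported on $F$ equals, as an independence structure, the pregeometry independence structure $\S_{(F,\mathrm{cl})}$.

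With the Key Lemma in hand, completeness is a transfer argument. Suppose $\Sigma\nvdash_{\mathcal I}\phi$. By hypothesis $(F,\mathrm{cl})$ satisfies (P1)--(P3), so the construction inside the proof of Theorem~\ref{compl_pregeom_atom_indep_logic} produces an assignment $s$ of the variables into $\S_{(F,\mathrm{cl})}$ which satisfies every atom of $\Sigma$ but refutes $\phi$; crucially, all the values $s(v)$ lie in $F$, since the witnesses used there (an element $e\in\mathrm{cl}(\emptyset)$, an independent set, and the element $d$ witnessing (P2)) are all elements of $F$. Now regard $s$ as an assignment into the forking independence structure $\S\in\C$. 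Because every value lies in $F$, the Key Lemma shows that $s$ satisfies exactly the same independence atoms in $\S$ as it did in $\S_{(F,\mathrm{cl})}$, so $s$ witnesses $\Sigma\not\models_\C\phi$. This completes the reduction.

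The main obstacle is the Key Lemma, and in particular making sure the passage from single elements to finite tuples is handled correctly: one must verify that non-forking of a tuple's type over $\by$ is equivalent to dimension preservation of that tuple, which is precisely the content of Theorem~\ref{rank_eq_dim} stated for tuples, and that Definition~\ref{3.14} quantifies over the same tuples. Once this equivalence is established, everything else is bookkeeping, since all the genuine combinatorial work has already been carried out in the proof of Theorem~\ref{compl_pregeom_atom_indep_logic}.
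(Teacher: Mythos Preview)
Your proposal is correct and follows essentially the same approach as the paper: soundness from Lemma~\ref{prop_frkindep}, and completeness by observing via Theorem~\ref{rank_eq_dim} that forking independence and pregeometry independence coincide on the strongly minimal set $F$, so the assignment constructed in the proof of Theorem~\ref{compl_pregeom_atom_indep_logic} works verbatim. You have in fact spelled out more carefully than the paper does why the Key Lemma holds at the level of tuples and why the assignment's values all land in $F$, but the strategy is identical.
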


%	\begin{definition} 
% that is the type $\mathrm{tp}(s(\vec{x}) / s(\vec{y}))$ is a non-forking extension of $\mathrm{tp}(s(\vec{x}) / \emptyset)$.
%
%\end{definition}
%
%
%\subsubsection{Soundness and Completeness} 

	\begin{proof} Soundness follows from Lemma \ref{prop_frkindep}. Regarding completeness, notice that by Lemma \ref{rank_eq_dim} independence inside a strongly minimal set $F$ coincides with non-forking. Thus, the proof of Theorem \ref{compl_pregeom_atom_indep_logic} goes through, i.e. one can build the assignment $s$ for the pregeometry $(F, \mathrm{cl})$ as we did in Theorem \ref{compl_pregeom_atom_indep_logic}.

\end{proof}

	As in the case of pregeometries, it is possible to formulate a conditional version of the system just described based on the syntax and deductive system of Atomic Conditional Independence Logic. By Lemma \ref{prop_frkindep}, we know that the system is sound but we do not know if it is complete. For particular theories $T$ we can make a further conclusion\footnote{We are indebted to Tapani Hyttinen for pointing this out.}. Suppose $T$ is the theory of vector spaces over a fixed finite field. Then $T$ is decidable, so in this case we get the decidability of the relation $\Sigma \models \vec{x} \botc{\vec{z}} \vec{y}$ for finite $\Sigma$.

\subsection{$\omega$-Stable Atomic Dependence Logic}

	As known \cite{GV}, in dependence logic the dependence atom is expressible in terms of the conditional independence atom. Indeed, for any team $X$ we have 
	\[ \D_X \models_s \dep(\vec{x}, \vec{y}) \text{ if and only if } \I_X \models_s \vec{y} \botc{\vec{x}} \vec{y}. \]
	
\index{$\omega$-Stable Atomic!Dependence Logic} We now define {\em $\omega$-Stable Atomic Dependence Logic} 
%($\mathrm{\text{$\omega$}SADL}$)
  as follows. The syntax and deductive system of this system are the same as those of Atomic Dependence Logic (see Section \ref{abs_sys}).
%	
%\subsubsection{Semantics}
%
	Let $T$ be an $\omega$-stable first-order  theory.
	% and $s:\mathrm{dom}(s) \rightarrow M$ with $\vec{x}  \vec{y} \subseteq \mathrm{dom}(s) \subseteq \mathrm{Var}$. 
	%We say that $\mathcal{M}$ satisfies $\dep(\vec{x}, \vec{y})$ under $s$, in symbols $\mathcal{M} \models_s \dep(\vec{x}, \vec{y})$, if 
	%\[ s(\vec{y}) \displayforkindep[s(\vec{x})] s(\vec{y}). \]
%
For every $\mathcal{M} \models T$ we obtain a dependence structure $$\D=\D_{(\mathcal{M}, \mbox{\scriptsize $\forkindep$}\!\hspace{-0.5pt}\hspace{-0.5pt})}=(M,\rr_\D)$$ by defining for finite subsets $\bx$ and $\by$ of $M$: 
$$\bx\rr\by\mbox{ if and only if } \by \displayforkindep[\bx] \by. $$  We call $\D$ a {\em  forking dependence structure}. 
Semantics of dependence atoms $\dep(\vx,\vy)$ is defined by  $$\D \models_s \dep(\vx,\vy)\mbox{ if and only if } s(\vx)\rr_\D s(\vy).$$
	
	\begin{lemma}\label{char_stab_dep_with_acl_dep} Let $\mathcal{M} \models T$ and $\vec{a}, \vec{b} \in M$, then 
		\[ \vec{b} \displayforkindep[\vec{a}] \vec{b} \text{ if and only if } \forall b \in \vec{b} \; \; b \in \mathrm{acl}(\vec{a}).\]
		
\end{lemma}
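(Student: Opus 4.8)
The plan is to unwind the definition of $\forkindep$ and reduce everything to two standard facts about $\omega$-stable theories: that a type has Morley rank $0$ exactly when it is algebraic, and that forking is detected by a drop in Morley rank. Recall that $\vec{b} \forkindep[\vec{a}] \vec{b}$ asserts that for every subtuple $\vec{d} \in \vec{b}^{<\omega}$ the type $\mathrm{tp}(\vec{d}/\vec{a}\,\vec{b})$ is a non-forking extension of $\mathrm{tp}(\vec{d}/\vec{a})$, i.e. $\mathrm{RM}(\mathrm{tp}(\vec{d}/\vec{a}\,\vec{b})) = \mathrm{RM}(\mathrm{tp}(\vec{d}/\vec{a}))$. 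So the whole lemma is really a dictionary between this rank equality and algebraicity over $\vec{a}$.

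For the forward direction I would assume $\vec{b} \forkindep[\vec{a}] \vec{b}$ and fix an arbitrary coordinate $b \in \vec{b}$. Applying the definition to the singleton subtuple $\vec{d} = (b)$, the type $\mathrm{tp}(b/\vec{a}\,\vec{b})$ is a non-forking extension of $\mathrm{tp}(b/\vec{a})$. The key observation is that $b$ occurs among the parameters $\vec{b}$, so the formula $v = b$ lies in $\mathrm{tp}(b/\vec{a}\,\vec{b})$; this type is therefore algebraic and has $\mathrm{RM}(\mathrm{tp}(b/\vec{a}\,\vec{b})) = 0$. Non-forking then forces $\mathrm{RM}(\mathrm{tp}(b/\vec{a})) = 0$, which by the rank-$0$ characterization of algebraicity gives $b \in \mathrm{acl}(\vec{a})$. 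Since $b$ was arbitrary, $\forall b \in \vec{b}\; b \in \mathrm{acl}(\vec{a})$.

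For the converse I would assume every coordinate of $\vec{b}$ lies in $\mathrm{acl}(\vec{a})$. Because a finite tuple is algebraic over a set precisely when each of its coordinates is, every subtuple $\vec{d} \in \vec{b}^{<\omega}$ satisfies $\vec{d} \in \mathrm{acl}(\vec{a})$, so $\mathrm{RM}(\mathrm{tp}(\vec{d}/\vec{a})) = 0$. Using monotonicity of Morley rank under enlarging the parameter set, $0 \leq \mathrm{RM}(\mathrm{tp}(\vec{d}/\vec{a}\,\vec{b})) \leq \mathrm{RM}(\mathrm{tp}(\vec{d}/\vec{a})) = 0$, whence the two ranks coincide and $\mathrm{tp}(\vec{d}/\vec{a}\,\vec{b})$ is a non-forking extension of $\mathrm{tp}(\vec{d}/\vec{a})$. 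As this holds for every $\vec{d}$, we conclude $\vec{b} \forkindep[\vec{a}] \vec{b}$.

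I do not expect a genuine obstacle here: the argument is a direct translation between the forking definition and Morley rank. The only points needing care are the two invoked standard facts (rank $0$ $\Leftrightarrow$ algebraic, and a tuple is algebraic iff each coordinate is) and the small but crucial remark that the ``diagonal'' type $\mathrm{tp}(\vec{b}/\vec{a}\,\vec{b})$ is automatically algebraic, since it pins down the values of its own variables. Everything else is routine.
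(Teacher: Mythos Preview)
Your proof is correct and follows the same line as the paper's: both reduce the statement to the equivalence $\mathrm{RM}(\mathrm{tp}(\vec{b}/\vec{a})) = 0 \Leftrightarrow \vec{b} \in \mathrm{acl}(\vec{a})$, using that the diagonal type $\mathrm{tp}(\vec{b}/\vec{a}\,\vec{b})$ has rank $0$. The paper presents this as a single chain of biconditionals on the whole tuple $\vec{b}$, whereas you work coordinate-by-coordinate and are more explicit about the quantification over subtuples in the definition of $\forkindep$, but the substance is identical.
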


\begin{proof} \[ \begin{array}{rcl}
	\vec{b} \forkindep[\vec{a}] \vec{b}  & \Longleftrightarrow &   \mathrm{RM}(\mathrm{tp}(\vec{b}/\vec{a} \cup \vec{b})) = \mathrm{RM}(\mathrm{tp}(\vec{b}/\vec{a}))\\
												      				  &  \Longleftrightarrow &  \mathrm{RM}(\mathrm{tp}(\vec{b}/\vec{a})) = 0 \\
												      				  &  \Longleftrightarrow &  \exists \phi(\vec{v}) \in \mathrm{tp}(\vec{b}/\vec{a}) \text{ s.t. } |\phi(\mathcal{M})| < \infty  \\	
													  				  &  \Longleftrightarrow &  \vec{b} \in \mathrm{acl}(\vec{a})) \\	
												      				  &  \Longleftrightarrow &  \forall b \in \vec{b} \;\; b \in \mathrm{acl}(\vec{a}).													

\end{array}
 \]
	
\end{proof}

%\subsubsection{Soundness and Completeness} 
%
	\begin{theorem} 
Suppose $\C$ is a class of dependence structures containing a forking dependence structure $\D_{(\mathcal{M}, \mbox{\scriptsize $\forkindep$}\!\hspace{-0.5pt}\hspace{-0.5pt})}$ such that the closure system $(M, \mathrm{acl})$ satisfies $\emptyset\ne\mathrm{acl}(\emptyset) \neq M$.
	 Then the Completeness Question for $\C$ has a positive answer i.e. if $\Sigma$ is a set  of  dependence atoms and $\phi$ is a dependence atom, then $$\Sigma\vdash_{\mathcal{D}}\phi\iff\Sigma\models_{\C} \phi.$$
\end{theorem}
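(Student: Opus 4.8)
The plan is to reduce this statement to the completeness result already established for closure operator dependence structures, namely Theorem~\ref{compl_clo_op_dep_logic}. The crucial observation is that the forking dependence structure $\D_{(\mathcal{M}, \mbox{\scriptsize $\forkindep$}\!\hspace{-0.5pt})}$ is literally the same object as the closure operator dependence structure $\D_{(M, \mathrm{acl})}$ built from the algebraic closure operator on $M$. Once this identification is made, the hypotheses of Theorem~\ref{compl_clo_op_dep_logic} are met verbatim and we are done.

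First I would spell out the identification using Lemma~\ref{char_stab_dep_with_acl_dep}. For finite $\bx, \by \subseteq M$, that lemma gives $\by \forkindep[\bx] \by$ if and only if $b \in \mathrm{acl}(\bx)$ for every $b \in \by$, i.e. if and only if $\by \subseteq \mathrm{acl}(\bx)$. By the definition of the forking dependence structure we have $\bx \rr_\D \by$ precisely when $\by \forkindep[\bx] \by$, while by the definition of the closure operator dependence structure (taking $\mathrm{cl} = \mathrm{acl}$) we have $\bx \rr \by$ precisely when $\by \subseteq \mathrm{acl}(\bx)$. Hence the two relations coincide on all finite subsets of $M$, so $\D_{(\mathcal{M}, \mbox{\scriptsize $\forkindep$}\!\hspace{-0.5pt})} = \D_{(M, \mathrm{acl})}$ as dependence structures, and therefore the two induce the same satisfaction relation $\models_s$ for dependence atoms.

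Next I would check that this identified structure satisfies the hypotheses of Theorem~\ref{compl_clo_op_dep_logic}. Recall that $\mathrm{acl}$ is a closure operator (one of the examples following Definition~\ref{def_clo_op}), so $(M, \mathrm{acl})$ is a closure system; and the hypothesis of the present theorem supplies exactly $\emptyset \neq \mathrm{acl}(\emptyset) \neq M$, which is the required condition $\emptyset \neq \mathrm{cl}(\emptyset) \neq M$. Since $\D_{(M, \mathrm{acl})} = \D_{(\mathcal{M}, \mbox{\scriptsize $\forkindep$}\!\hspace{-0.5pt})} \in \C$, Theorem~\ref{compl_clo_op_dep_logic} applies directly and yields $\Sigma \vdash_{\mathcal{D}} \phi \iff \Sigma \models_\C \phi$.

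Because the entire argument is an identification of structures followed by an appeal to a theorem proved earlier, I do not expect any genuine obstacle. The one point that must be verified with care is that the forking and algebraic-closure dependence relations agree on \emph{all} finite sets, not merely on singletons; but this is precisely the content of Lemma~\ref{char_stab_dep_with_acl_dep}, which handles arbitrary finite tuples $\vec{a}, \vec{b}$ and reduces $\vec{b} \forkindep[\vec{a}] \vec{b}$ to membership in $\mathrm{acl}(\vec{a})$, so nothing further is needed.
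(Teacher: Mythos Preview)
Your proposal is correct and follows exactly the paper's approach: the paper's proof is the single sentence ``Because of Lemma~\ref{char_stab_dep_with_acl_dep}, this reduces to Theorem~\ref{compl_clo_op_dep_logic},'' and you have simply spelled out the details of that reduction. Nothing further is needed.
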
	
		
	\begin{proof} Because of Lemma~\ref{char_stab_dep_with_acl_dep}, this reduces to Theorem~\ref{compl_clo_op_dep_logic}.

\end{proof}

\subsection{Forking in Vector Spaces and Algebraically Closed Fields}\label{vsacf}

	In this last section we show that the conditions imposed on $\omega$-stable $T$ in the definition of the system $\omega$-Stable Atomic Independence Logic are satisfied by the theory of infinite vector spaces over a countable field and the theory of algebraically closed fields of fixed characteristic. These facts are very well-known among model theorists, we include them for completeness of exposition.

We denote by $\mathrm{VS}^{\mathrm{inf}}_{\mathbb{K}}$ the theory of infinite vector spaces over a fixed field $\mathbb{K}$.
%\[ \mathrm{VS}_{\mathbb{K}} \cup \mbox{\Large $\{$ } \!\! \exists x_0 \cdots \exists x_{n-1} \bigwedge_{\substack{i,j = 0 \\ i \neq j}}^{n-1} x_i \neq x_j : n \in \omega \mbox{\Large $\}$.}\] 
	
	\begin{proposition}\label{inf_vs_str_min} The theory $\mathrm{VS}^{\mathrm{inf}}_{\mathbb{K}}$ is strongly minimal.
		%\begin{enumerate}[i)]
		%	\item The theory $\mathrm{VS}^{\mathrm{inf}}_{\mathbb{K}}$ is strongly minimal.
		%	\item Let $\mathcal{V} \models \mathrm{VS}^{\mathrm{inf}}_{\mathbb{K}}$ and $A \subseteq V$, then $\mathrm{acl}(A) = \langle A \rangle$.
%\end{enumerate}

\end{proposition}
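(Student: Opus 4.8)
The plan is to prove that $\mathrm{VS}^{\mathrm{inf}}_{\mathbb{K}}$ admits quantifier elimination and then to read off strong minimality directly from the shape of quantifier-free formulas in one free variable. I would work in the natural language $L_{\mathbb{K}} = \{0, +, -\} \cup \{\lambda_r : r \in \mathbb{K}\}$, where $\lambda_r$ denotes scalar multiplication by $r$. A substructure of a model is then precisely a $\mathbb{K}$-subspace, since it must be closed under the group operations and under every $\lambda_r$; this identification is the piece of bookkeeping on which everything rests.

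To establish quantifier elimination I would use the standard one-point embedding test. Suppose $\mathcal{M}, \mathcal{N} \models \mathrm{VS}^{\mathrm{inf}}_{\mathbb{K}}$ share a common substructure $\mathcal{A}$, with $\mathcal{N}$ sufficiently saturated, and let $a \in M$; the task is to extend the inclusion $\mathcal{A} \hookrightarrow \mathcal{N}$ to the substructure generated by $a$. If $a \in A$ there is nothing to do, so assume $a \notin A$. Since $A$ is a subspace, this means exactly that $a$ is linearly independent over $A$, and the substructure generated by $A \cup \{a\}$ is the direct sum $A \oplus \mathbb{K}a$. Because every model is infinite, the partial type $\{v \neq c : c \in A\}$ is finitely satisfiable in $\mathcal{N}$, so saturation yields $b \in N \setminus A$, again linearly independent over $A$. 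The assignment $a \mapsto b$ then extends the identity on $A$ to an $L_{\mathbb{K}}$-isomorphism $A \oplus \mathbb{K}a \to A \oplus \mathbb{K}b$ over $A$ (send $c + ra$ to $c + rb$), which is the required embedding. This gives \emph{quantifier elimination}.

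Granting this, consider any parametrically definable $D \subseteq M$ in one variable $v$. By quantifier elimination $D$ is defined by a boolean combination of atomic formulas, each of which reduces, after collecting terms, to a single linear equation $kv = -c$ for some $k \in \mathbb{K}$ and some parameter $c$ lying in the subspace generated by the parameters. If $k \neq 0$ this has the unique solution $v = -k^{-1}c$ and defines one point; if $k = 0$ it defines $\emptyset$ or all of $M$ according to whether $c \neq 0$ or $c = 0$. Hence every atomic formula in $v$, and therefore every boolean combination, defines a finite or cofinite set. As each model is infinite, $v = v$ is minimal in every model, and since quantifier elimination applies uniformly across all models of the theory, including every elementary extension, the formula $v = v$ is strongly minimal; thus $\mathrm{VS}^{\mathrm{inf}}_{\mathbb{K}}$ is strongly minimal.

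The only real content is the quantifier-elimination step, and within it the one-point extension of embeddings. I expect no serious obstacle there: it is light linear algebra once saturation supplies an element outside the common subspace. The point that most needs care is simply verifying that substructures are exactly subspaces and that atomic formulas in a single variable collapse to one equation $kv = -c$, after which the finite-or-cofinite dichotomy is immediate.
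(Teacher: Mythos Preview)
Your proof is correct. The quantifier-elimination argument via the one-point embedding test is the standard route, and you carry it out cleanly: the identification of substructures with subspaces, the use of saturation to find $b \in N \setminus A$, the explicit isomorphism $c + ra \mapsto c + rb$, and the reduction of atomic formulas in one variable to a single equation $kv = -c$ are all sound. The conclusion that every definable subset in one variable is finite or cofinite then follows immediately.

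There is nothing to compare against: the paper states the proposition without proof, remarking only that such facts are ``very well-known among model theorists'' and are included for completeness of exposition. Your write-up supplies exactly the argument a reader would expect to see if one were given.
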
 

%	\begin{proof} See \cite[Example 8.1.10]{marker}.

%\end{proof}

%	
\noindent	Let $\mathbb{K}$ be a countable field.

	\begin{proposition} 
		\begin{enumerate}[i)]
			\item The theory $\mathrm{VS}^{\mathrm{inf}}_{\mathbb{K}}$ is $\omega$-stable.
			\item The theory $\mathrm{VS}^{\mathrm{inf}}_{\mathbb{K}}$ has a strongly minimal set $F = \phi(\mathfrak{M}, \emptyset)$ such that $(F, \mathrm{cl})$ has properties (P1), (P2) and (P3).
		\end{enumerate}

\end{proposition}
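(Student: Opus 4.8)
The plan is to derive both parts from the strong minimality of $\mathrm{VS}^{\mathrm{inf}}_{\mathbb{K}}$ already established in Proposition~\ref{inf_vs_str_min}, together with the observation that the pregeometry attached to this theory is the usual linear-algebra pregeometry of Example~\ref{span_is_pregeo}. For (i) the first point to record is that, since $\mathbb{K}$ is countable and the language of $\mathrm{VS}^{\mathrm{inf}}_{\mathbb{K}}$ contains one unary function symbol for each scalar $\lambda \in \mathbb{K}$, the language is countable, so that the notion of $\omega$-stability is meaningful. I would then invoke the standard fact that every strongly minimal theory in a countable language is totally transcendental, and in particular $\omega$-stable (see \cite{marker}); by Proposition~\ref{inf_vs_str_min} this applies to $\mathrm{VS}^{\mathrm{inf}}_{\mathbb{K}}$. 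Alternatively, one can count types directly: for countable $A \subseteq M$ the subspace $\langle A \rangle$ is countable, and by quantifier elimination for vector spaces the type of an $n$-tuple $\vec{b}$ over $A$ is determined by the $\mathbb{K}$-linear relations its entries satisfy over $\langle A \rangle$; there are only countably many such descriptions, whence $|S^{\mathcal{M}}_n(A)| = \omega$.

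For (ii) I would take $\phi(v)$ to be $v = v$, so that $F = \phi(\mathfrak{M}, \emptyset) = \mathfrak{M}$ is strongly minimal by Proposition~\ref{inf_vs_str_min} and $\vec{c} = \emptyset$. The key structural step is to identify the pregeometry closure $\mathrm{cl}(X) = \mathrm{acl}(X) \cap F = \mathrm{acl}(X)$ with the linear span $\langle X \rangle$: the inclusion $\langle X \rangle \subseteq \mathrm{acl}(X)$ is immediate, since each vector of $\langle X \rangle$ is a single $X$-definable point, and the reverse inclusion follows from strong minimality together with infinite-dimensionality, as any vector outside $\langle X \rangle$ lies in a single infinite $\mathrm{Aut}(\mathfrak{M}/X)$-orbit and is therefore non-algebraic. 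Thus $(F, \mathrm{cl})$ is exactly the span pregeometry of Example~\ref{span_is_pregeo}.

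It then remains to check (P1)--(P3). For (P1), the zero vector is $\emptyset$-definable as the additive identity while every nonzero vector is non-algebraic over $\emptyset$, so $\mathrm{cl}(\emptyset) = \langle \emptyset \rangle = \{0\} \neq \emptyset$. For (P2), given a finite linearly independent $D_0 = \{v_1, \dots, v_k\}$, the vector $v_1 + \cdots + v_k$ lies in $\mathrm{cl}(D_0) = \langle D_0 \rangle$ but in no $\langle D \rangle$ with $D \subsetneq D_0$: were it in $\langle D_0 \setminus \{v_j\}\rangle$, one could solve for $v_j$ as a combination of the remaining $v_i$, contradicting independence. For (P3), the monster model $\mathfrak{M}$ is highly saturated and realizes the consistent requirement that arbitrarily large finite linearly independent sets exist, so $\mathrm{dim}(\mathfrak{M}) \geq \omega$.

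I expect no genuine obstacle here; the arguments are elementary once the pregeometry has been recognized as the span pregeometry. The only points requiring slight care are the identification of $\mathrm{acl}$ with the linear span and the explicit non-triviality witness for (P2) --- and I would emphasize that the latter argument uses only that the coefficient $1$ of each $v_j$ is nonzero, so it remains valid over every field, including finite ones. The countability hypothesis on $\mathbb{K}$ plays its role solely in keeping the language countable, which is what makes $\omega$-stability in (i) meaningful.
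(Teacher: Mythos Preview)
Your proposal is correct and follows the standard line of argument. Note, however, that the paper does not actually give a proof of this proposition: it is stated without proof, with the surrounding text remarking that ``these facts are very well-known among model theorists, we include them for completeness of exposition,'' and earlier (Section~\ref{pregeometries}) observing that (P1) and (P2) are always satisfied for vector spaces. Your argument---deducing $\omega$-stability from strong minimality, taking $\phi(v)$ to be $v=v$, identifying $\mathrm{acl}$ with linear span, and exhibiting $v_1+\cdots+v_k$ as a witness for (P2)---is exactly the routine verification the paper leaves to the reader, so there is nothing to compare beyond the fact that you have spelled it out.
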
 
	
	%\begin{proof} The theory $\mathrm{VS}^{\mathrm{inf}}_{\mathbb{K}}$ is $\kappa$-categorical for $\kappa \geq \aleph_1$. Thus by \cite[Theorem 6.1.18]{marker} we have that $T$ is $\omega$-stable. Notice that the countability of $\mathbb{K}$ is necessary here, and indeed \cite[Theorem 6.1.18]{marker} applies only for countable signatures. 
		
%\end{proof}

	%\begin{proposition} The theory $\mathrm{VS}^{\mathrm{inf}}_{\mathbb{K}}$ has a strongly minimal set $F = \phi(\mathfrak{M}, \emptyset)$ such that $(F, \mathrm{cl})$ has properties (P1), (P2) and (P3).
		
%\end{proposition}

	%\begin{proof} Let $\mathcal{V} \models \mathrm{VS}^{\mathrm{inf}}_{\mathbb{K}}$. By Proposition~\ref{inf_vs_str_min} i), the set $V$ is strongly minimal. Thus the pregeometry $(V, \mathrm{cl})$ defined as $\mathrm{cl}(X) = \mathrm{acl}(X) \cap V$, is just the pregeometry $(V, \mathrm{acl})$, which in turn, by Proposition~\ref{inf_vs_str_min} ii), is just the pregeometry $(V, \langle \rangle)$. Hence the proof reduces to Theorem~\ref{span_is_pregeom_with_indep_prop}.
		
%\end{proof}

	For $p = 0$ or a prime number we denote by $\mathrm{ACF}_p$ the theory of algebraically closed fields of characteristic $p$.

	\begin{proposition}
		\begin{enumerate}[i)]
		\item The theory $\mathrm{ACF}_p$ is strongly minimal.
		\item The theory $\mathrm{ACF}_p$ is $\omega$-stable.
		\item The theory $\mathrm{ACF}_p$ has a strongly minimal set $F = \phi(\mathfrak{M}, \emptyset)$ such that $(F, \mathrm{cl})$ has properties (P1), (P2) and (P3).
		
\end{enumerate}
		
\end{proposition}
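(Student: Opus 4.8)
The plan is to prove the three parts in order, with parts (ii) and (iii) resting on part (i). Throughout write $\mathbb{K}_p$ for the prime field ($\mathbb{Q}$ when $p = 0$ and $\mathbb{F}_p$ when $p$ is prime), and recall from Example~\ref{acl_is_pregeo} that on an algebraically closed field the operator $\mathrm{acl}$ agrees with field-theoretic algebraic closure and yields a pregeometry whose dimension is transcendence degree over $\mathbb{K}_p$. For (i) I would use quantifier elimination for $\mathrm{ACF}_p$ (Tarski): every formula is equivalent to a quantifier-free one, so every subset of a model $\mathcal{N} \models \mathrm{ACF}_p$ definable in a single variable is a finite Boolean combination of zero-sets of polynomials $q(v)$ over $\mathcal{N}$. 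A nonzero polynomial has finitely many roots while the zero polynomial defines all of $\mathcal{N}$, so every such definable set is finite or cofinite. Since $\mathrm{ACF}_p$ is complete, any elementary extension of a model is again algebraically closed, so the dichotomy holds in all models; hence $v = v$ is strongly minimal and $\mathrm{ACF}_p$ is strongly minimal. For (ii) I would invoke the standard fact that every strongly minimal theory is $\omega$-stable, which applies by (i); concretely, over a countable set $A$ quantifier elimination shows a complete $1$-type is determined by the prime ideal it forces to vanish over the field generated by $A$ (the maximal ideal of an irreducible polynomial, or the zero ideal), of which there are only countably many, and an induction on $n$ realizing one coordinate at a time gives countably many $n$-types.

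For (iii) I would take $\phi(v)$ to be $v = v$, so that $F = \phi(\mathfrak{M}, \emptyset) = \mathfrak{M}$ is strongly minimal over $\emptyset$ by (i) and $\mathrm{cl} = \mathrm{acl}$. Property (P1) is immediate, since $\mathrm{acl}(\emptyset)$ is the algebraic closure of $\mathbb{K}_p$, which contains $0$ and $1$ and is therefore nonempty. Property (P3) holds because the monster model $\mathfrak{M}$ has infinite transcendence degree over $\mathbb{K}_p$, so $\dim(\mathfrak{M}) \geq \omega$.

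The substantive step is (P2). Given a finite independent set $D_0 = \{a_0, \dots, a_{n-1}\}$ (independent meaning algebraically independent over $\mathbb{K}_p$) with $n \geq 1$, I claim $c = a_0 + \cdots + a_{n-1}$ lies in $\mathrm{cl}(D_0) \setminus \bigcup_{D \subsetneq D_0} \mathrm{cl}(D)$. Indeed $c \in \mathrm{acl}(D_0)$; and if $c \in \mathrm{acl}(D)$ for some proper $D \subsetneq D_0$, then choosing $a_j \notin D$ we have $\mathrm{acl}(D) \subseteq \mathrm{acl}(D_0 \setminus \{a_j\})$, which would contain both $c$ and every $a_i$ with $i \neq j$, forcing $a_j = c - \sum_{i \neq j} a_i \in \mathrm{acl}(D_0 \setminus \{a_j\})$ and contradicting the independence of $D_0$. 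The degenerate case $D_0 = \emptyset$ is already covered by (P1). This is exactly where the argument has content: whereas (P1) and (P3) are immediate and (i)--(ii) are routine consequences of quantifier elimination, (P2) demands, for \emph{every} finite independent set, a closure element escaping the closure of all proper subsets, and the summation witness $c$ above is the crux. Everything else reduces to quantifier elimination together with the pregeometry facts already recorded in the excerpt.
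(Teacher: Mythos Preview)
Your argument is correct. The paper itself does not prove this proposition: it is stated without proof, introduced only with the remark that these facts ``are very well-known among model theorists'' and ``include[d] for completeness of exposition.'' Your treatment of (i) via Tarski's quantifier elimination and of (ii) via the implication strongly minimal $\Rightarrow$ $\omega$-stable is the standard route; the verification of (P1)--(P3) for $F=\mathfrak{M}$ with $\mathrm{cl}=\mathrm{acl}$ is exactly what is needed, and your summation witness $c=a_0+\cdots+a_{n-1}$ for (P2) is the natural choice (using that $\mathrm{acl}(D_0\setminus\{a_j\})$ is a subfield, hence closed under subtraction). There is nothing to compare against in the paper, and nothing to correct in your proposal.
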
 

%\begin{proof}
	
	%i). See~\cite[Corollary~3.2.9]{marker}.
	
	%ii). The theory $\mathrm{ACF}_p$ is $\kappa$-categorical for $\kappa \geq \aleph_1$. Thus by \cite[Theorem 6.1.18]{marker} we have that $T$ is $\omega$-stable.
	
	%iii). Let $\mathcal{K}$ be an algebraically closed field such that $\mathrm{trdg}(\mathcal{K}/\mathcal{P}) = \aleph_0$, where $\mathcal{P}$ denotes the prime field of $\mathcal{K}$. By i), the set $K$ is strongly minimal. Thus the pregeometry $(K, \mathrm{cl})$ defined as $\mathrm{cl}(X) = \mathrm{acl}(X) \cap K$, is just the pregeometry $(K, \mathrm{acl})$. Hence the proof reduces to Theorem~\ref{acf_acl_prop}.

%\end{proof}

\section{Conclusion}

We introduced several forms of dependence and independence logics:

\begin{itemize}
\item  Dependence Logic with atoms $\Dep{x}{y}$
\item  Absolute Independence Logic with atoms $\bota (\vec{x})$
\item  Independence Logic with atoms $\vec{x} \boto \vec{y}$
%\item  Absolute Conditional Independence Logic with atoms $\bota_{\vec{z}} (\vec{x})$ 
\item  Conditional Independence Logic with atoms $\vec{x} \botc{\vec{z}} \vec{y}$
%\item Closure Operator  Dependence Logic with atoms $\dep(\vec{x}, \vec{y})$

%\item Pregeometry  Absolute Independence Logic with atoms $\bota (\vec{x})$
%\item Pregeometry  Independence Logic  with atoms $\vec{x} \boto \vec{y}$
%\item Pregeometry  Conditional Independence Logic with atoms 
%$\vec{x} \botc{\vec{z}} \vec{y}$
%\item $\omega$-Stable  Independence Logic with atoms $\vec{x} \boto \vec{y}$
%\item $\omega$-Stable  Conditional Independence Logic with atoms $\vec{x} \botc{\vec{z}} \vec{y}$ 
%\item $\omega$-Stable  Dependence Logic with atoms $\dep(\vec{x}, \vec{y})$
\end{itemize}
that we studied on the {\em atomic} level, where we already have non-trivial questions about axiomatizability and decidability. We gave these concepts meaning in different contexts such as:
\begin{itemize}
\item Team semantics (i.e. databases)
\item Closure Operators % Dependence Logic with atoms $\dep(\vec{x}, \vec{y})$

\item Pregeometries %  Absolute Independence Logic with atoms $\bota (\vec{x})$
%\item Pregeometry  Independence Logic  with atoms $\vec{x} \boto \vec{y}$
%\item Pregeometry  Conditional Independence Logic with atoms 
%$\vec{x} \botc{\vec{z}} \vec{y}$
\item $\omega$-Stable first-order theories.
% Independence Logic with atoms $\vec{x} \boto \vec{y}$
%\item $\omega$-Stable  Conditional Independence Logic with atoms $\vec{x} \botc{\vec{z}} \vec{y}$ 
%\item $\omega$-Stable  Dependence Logic with atoms $\dep(\vec{x}, \vec{y})$

\end{itemize}In several important cases we found that the common axioms of independence, going back to Whitney \cite{whitney} and van der Waerden \cite{waerden}, are complete on the atomic level. As pointed out in \cite{KLV}, these are the same axioms that govern central concepts of independence in database theory. Furthermore, it can be argued that these axioms govern the concepts of dependence and independence in a whole body of areas of science and the humanities. Thus the concepts of dependence and independence enjoy a  remarkable degree of robustness, reminiscent of their central role in mathematics and its applications. The situation of this paper, where variables are interpreted as elements of a structure with a pregeometry, is naturally much richer than the more general environment of database theory, or team semantics, where variables are interpreted as vectors with no field structure on the coefficients. However, our results demonstrate that it is possible to consider algebra, and by the same token model theory of stable theories, in the same framework with databases and other more general structures where dependence and independence concepts make sense. It is possible that database theory benefits from such a common framework, but also that algebra and model theory benefit from this connection. At least the concepts of dependence and independence have, by virtue of their axioms, meaning that crosses over the territory from database theory all the way to algebra and model theory. 

%\subsection{Future Work}

%	The positive results that we were able to obtain open the way for further studies, there are indeed several open questions and natural extensions and generalizations of the study undertaken. In the following we draw some possible lines of research.
	
%	\begin{enumerate}
%		\item Deeper analysis of the conditional independence atoms. 
%		\item Analysis of the forking independence relation in more general classes of theories: superstable theories, stable theories, simple theories.
%		\item Analysis of the so-called abstract independence relations.
%		\item Analysis of independence relations in abstract elementary classes.
%		\item Analysis of other forms of (in)dependence: inclusion, orthogonality.
%		\item Extension of the analysis to wider fragments of dependence logic.
%		\item Solutions of specific open questions in classification theory via tools from dependence logic. 

%\end{enumerate}

%\bibliographystyle{plain}
%\bibliography{indepbibPV}

\begin{thebibliography}{10}
\bibitem{zbMATH00839556}
Serge Abiteboul, Richard Hill, and Victor Vianu.
\newblock {\em Foundations of Databases}. Addison-Wesley, Amsterdam, 1995.
   

\bibitem{DBLP:conf/ifip/Armstrong74}
William~Ward Armstrong.
\newblock {\em Dependency Structures of Database Relationships}.
\newblock In IFIP Congress, 580-583, 1974.

\bibitem{MR747685}
John~T. Baldwin.
\newblock {\em First-order Theories of Abstract Dependence Relations}.
\newblock Ann. Pure Appl. Logic, {26}(3):215--243, 1984.

\bibitem{rota}
Henry~H. Crapo and Giancarlo Rota.
\newblock {\em On the Foundations of Combinatorial Theory: Combinatorial Geometries}.
\newblock M.I.T. Press, Cambridge, Mass, 1970.


\bibitem{dawid}
A. P. Dawid. 
\newblock {\em Separoids: a Mathematical Framework for Conditional Independence and Irrelevance.}  
\newblock Ann. Math. Artif. Intell., 32(01):335-372, 2001. 

\bibitem{galliani_and_vaananen}
Pietro Galliani and Jouko V\"a\"an\"anen.
\newblock {\em On Dependence Logic}.
\newblock In: Trends in Logic: Outstanding Contributions (ed. A. Baltag and S. Smets), Springer, 2014.

\bibitem{geiger1991axioms}
Dan Geiger, Azaria Paz, and Judea Pearl.
\newblock {\em Axioms and Algorithms for Inferences Involving Probabilistic Independence}.
\newblock Inform. and Comput., {91}(1):128-141, 1991.

%\bibitem{GV12}
%Erich Gr{\"a}del and Jouko V{\"a}{\"a}n{\"a}nen.
%\newblock {Dependence, Independence, and Incomplete Information}.
%\newblock In {\em {Proceedings of 15th International Conference on Database
%  Theory, ICDT~2012}}, 2012.

\bibitem{GV}
Erich Gr\"adel and Jouko V\"a\"an\"anen.
\newblock {\em Dependence and Independence}.
\newblock Studia Logica, {101}(2):399-410, 2013.

\bibitem{grossberg}
Rami Grossberg and Olivier Lessmann.
\newblock {\em Dependence Relation in Pregeometries}.
\newblock Algebra Universalis, {44}:199-216, 2000.


\bibitem{herrmann_undec}
Christian Herrmann.
\newblock {\em On the Undecidability of Implications Between Embedded Multivalued Database Dependencies}.
\newblock Inform. and Compt., {122}(2):221-235, 1995.

\bibitem{herrmann_undec_corrig}
Christian Herrmann.
\newblock {\em Corrigendum to {"}{O}n the Undecidability of
  Implications Between Embedded Multivalued Database Dependencies{"} [Inform. and Compt., {122}(2):221-235, 1995]}.
\newblock Inform. and Computat., {204}(12):1847--1851, 2006.


\bibitem{MR99i:03044}
Wilfrid Hodges.
\newblock {\em Some Strange Quantifiers}.
\newblock In: Structures in Logic and Computer Science, Volume 1261 of
  { Lecture Notes in Comput. Sci.}, 51-65. Springer, Berlin, 1997.

\bibitem{hytpao}
Tapani Hyttinen and Gianluca Paolini.
\newblock {\em Reduction of Database Independence to Dividing in Atomless Boolean Algebras}.
\newblock Archive for Mathematical Logic, published online.

\bibitem{KLV}
Juha Kontinen, Sebastian Link, and Jouko V\"a\"an\"anen.
\newblock {\em Independence in Database Relations}.
\newblock In: L. Libkin, U. Kohlenbach, and R. de Queiroz (Eds.): WoLLIC 2013, LNCS 8071, 179-193, 2013.

\bibitem{kont}
Juha Kontinen and Jouko V{\"a}{\"a}n{\"a}nen.
\newblock {\em On Definability in Dependence Logic}.
\newblock J. Log. Lang. Inf., 18(3):317--332, 2009.

\bibitem{kontvaa}
Juha Kontinen and Jouko V{\"a}{\"a}n{\"a}nen.
\newblock {\em Axiomatizing First-Order Consequences in Dependence Logic}.
\newblock Ann. Pure Appl. Logic, 164(11):1101-1117, 2013.

\bibitem{marker}
David Marker.
\newblock {\em Introduction to Model Theory}.
\newblock Springer, 2002.

\bibitem{milne}
James~S. Milne.
\newblock {\em Fields and Galois Theory} (v4.21), 2008.
\newblock Available at www.jmilne.org/math/.

\bibitem{naumov}
Pavel Naumov and Brittany Nicholls.
\newblock {\em R.{E}. Axiomatization of Conditional Independence}, 2013.
\newblock Available at
  http://www2.mcdaniel.edu/pnaumov/papers/2013.tark.nn.pdf.

\bibitem{pao}
Gianluca Paolini.
\newblock {\em Independence Logic and Abstract Independence Relations}.
\newblock Math. Log. Q., 61(03):202-216, 2015.

\bibitem{Parker:1980:IIE:582250.582259}
D.~Stott Parker, Jr. and Kamran Parsaye-Ghomi.
\newblock {\em Inferences Involving Embedded Multivalued Dependencies and Transitive
  Dependencies}.
\newblock In: {Proceedings of the 1980 ACM SIGMOD international conference
  on Management of data}, SIGMOD '80, 52-57, New York, NY, USA, 1980.
  ACM.

\bibitem{pillay}
Anand Pillay.
\newblock {\em Geometric Stability Theory}.
\newblock Oxford University Press, 1996.

\bibitem{MR2351449}
Jouko V{\"a}{\"a}n{\"a}nen.
\newblock {\em Dependence Logic}, volume~70 of London Mathematical Society Student Texts.
\newblock Cambridge University Press, Cambridge, 2007.

\bibitem{waerden}
Bartel~Leendert van~der Waerden.
\newblock {\em Moderne {A}lgebra}.
\newblock J. Springer, Berlin, 1940.

\bibitem{whitney}
Hassler Whitney.
\newblock {\em On the {A}bstract {P}roperties of {L}inear {D}ependence}.
\newblock Amer. J. Math., {57}(3):509--533, 1935.

\end{thebibliography}

\def\cprime{$'$} \def\Dbar{\leavevmode\lower.6ex\hbox to 0pt{\hskip-.23ex
  \accent"16\hss}D} \def\cprime{$'$}

\end{document}